\DeclareFontFamily{OT1}{rsfs}{}
\DeclareFontShape{OT1}{rsfs}{n}{it}{<-> rsfs10}{}
\DeclareMathAlphabet{\mathscr}{OT1}{rsfs}{n}{it}
\newtheorem{theorem}{Theorem}[section]
\newtheorem{lemma}[theorem]{Lemma}
\newtheorem{claim}[theorem]{Claim}
\theoremstyle{definition} }
\theoremstyle{remark} \newtheorem{remark}[theorem]{Remark}
\newtheorem{example}[theorem]{Example}}
\newcommand{\Cbb}{{\mathbb{C}}}
\newcommand{\Pbb}{{\mathbb{P}}}
\newcommand{\cA}{{\mathscr A}}
\newcommand{\cE}{{\mathscr E}}
\newcommand{\cF}{{\mathscr F}}
\newcommand{\cO}{{\mathscr O}}
\newcommand{\Til}[1]{{\widetilde{#1}}}
\newcommand{\one}{1\hskip-3.5pt1}
\newcommand{\csm}{{c_{\text{SM}}}}
\DeclareMathOperator{\im}{im}
\DeclareMathOperator{\codim}{codim}
\DeclareMathOperator{\Tor}{Tor}
\newcommand{\qede}{\hfill$\lrcorner$}
\newcommand{\mustata}{{Musta\c{t}\v{a}}}
\DeclareMathOperator{\Der}{Der}
\title{Chern classes of free hypersurface arrangements}
\author{Paolo Aluffi}
\address{
Mathematics Department, 
Florida State University,
Tallahassee FL 32306, U.S.A.
}
\email{aluffi@math.fsu.edu}
\begin{document}

\begin{abstract}
The Chern class of the sheaf of logarithmic derivations along a simple normal 
crossing divisor equals the Chern-Schwartz-MacPherson class of the
complement of the divisor. We extend this equality to more general
divisors, which are locally analytically isomorphic to free hyperplane 
arrangements.
\end{abstract}

\maketitle


\section{Introduction}\label{intro}

For us, an {\em arrangement\/} in a nonsingular variety $V$ is a reduced divisor~$D$
consisting of a union of nonsingular hypersurfaces, such that at each point $D$ is 
locally analytically isomorphic to a 
hyperplane arrangement. We say that
the arrangement is {\em free\/} if all these local models may be chosen to be free
hyperplane arrangements. It follows that $D$ is itself a {\em free divisor\/} on $V$:
the sheaf of logarithmic differentials $\Omega_V^1(\log D)$ along $D$ is locally
free. Equivalently, its dual sheaf of {\em logarithmic derivations,\/} 
$\Der_V(D):=\Omega_V^1(\log D)^\vee$, is locally free.
Free hyperplane arrangements in~$\Pbb^n$ and divisors with simple
normal crossings in a nonsingular variety give examples of free hypersurface 
arrangements.

In this note we extend to free hypersurface arrangements a result that is known
to hold for these examples.

\begin{theorem}\label{main}
Let $V$ be a nonsingular compact complex variety, and let $D\subseteq V$ be
a free hypersurface arrangement. Then
\[
c(\Der_V(D))\cap [V] = \csm(\one_{V\smallsetminus D})\quad.
\]
\end{theorem}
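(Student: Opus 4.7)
The plan is to reduce the theorem to the two settings in which the identity is already known, free hyperplane arrangements in projective space and simple normal crossing divisors, by combining the additivity of Chern-Schwartz-MacPherson classes with the local-analytic model provided by the freeness hypothesis.

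I would begin by reformulating the identity so that both sides become classes supported on $D$. Additivity of CSM classes applied to the decomposition $V = (V\smallsetminus D)\sqcup D$, together with the smoothness of $V$, gives
\[
\csm(\one_{V\smallsetminus D}) = c(TV)\cap[V] - \csm(\one_D).
\]
On the other side, the defining short exact sequence
\[
0 \to \Der_V(D) \to TV \to \mathscr{Q} \to 0
\]
produces a coherent torsion sheaf $\mathscr{Q}$ supported on $D$ and yields $c(\Der_V(D))\cap[V] = c(TV)\cdot c(\mathscr{Q})^{-1}\cap[V]$. Theorem \ref{main} is therefore equivalent to the identity
\[
\csm(\one_D) = c(TV)\cdot\bigl(1 - c(\mathscr{Q})^{-1}\bigr)\cap[V],
\]
whose right-hand side depends only on the local analytic structure of $D$, through the sheaf $\mathscr{Q}$. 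This reformulation is already straightforward to verify in the SNC case, where $\mathscr{Q} \cong \bigoplus_i N_{D_i/V}$ and the formula recovers the classical expression for the CSM class of the complement.

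The next step is to introduce a stratification compatible with the local models. At each $p\in D$ the freeness hypothesis provides an analytic isomorphism $(V,D,p) \cong (\Cbb^n,\cA_p,0)$ with $\cA_p$ a free hyperplane arrangement, and the intersection lattice of $\cA_p$ supplies a canonical germ of stratification; these germs patch to a global Whitney stratification $\{S_\alpha\}$ of $V$ refining $D$, since the local analytic type of $\cA_p$ is constant along each stratum. Applying additivity of CSM classes, $\csm(\one_D) = \sum_{\alpha}\csm(\one_{S_\alpha})$, and the plan is to match this decomposition with an analogous expansion of the right-hand side of the equivalent identity, obtained by localizing $c(\mathscr{Q})$ along each stratum. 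Since the identity in question is already established for free hyperplane arrangements in $\Pbb^n$, the stratum-by-stratum contributions agree, and the global identity follows.

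The main obstacle is precisely this local-to-global step, since CSM classes are not manifestly local invariants. A natural tool is the characteristic cycle $CC(\one_D)\in Z_*(T^*V)$, whose components along the conormals $T^*_{S_\alpha}V$ are determined by the local analytic type of $D$ and which computes $\csm(\one_D)$ via MacPherson's local Euler obstruction. Because both $CC(\one_D)$ and the coherent sheaf $\mathscr{Q}$ localize at a stratum to the data prescribed by the local free hyperplane model $\cA_p$, and because the identity is known for each such $\cA_p$, the global identity is forced. A possible alternative would be to proceed by induction on the number of components via a deletion-restriction argument, using the analytic freeness hypothesis to transfer the classical deletion-restriction exact sequence for free hyperplane arrangements to this hypersurface setting, and reducing the base of the induction to the SNC case.
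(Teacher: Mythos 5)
Your opening reductions are fine: since $\Der_V(D)$ is locally free, the exact sequence $0\to\Der_V(D)\to \Der_V\to\mathscr{Q}\to 0$ together with additivity of $\csm$ does recast the theorem as a comparison of $\csm(\one_D)$ with $c(TV)\cdot\bigl(1-c(\mathscr{Q})^{-1}\bigr)\cap[V]$, and the SNC case is indeed a direct check. The problem is that everything after that is a statement of intent rather than a proof, and the intent rests on a step that does not go through as described. The result you want to quote as the ``local input'' --- Theorem~4.1 of \cite{hyparr} --- is a \emph{global} statement about free arrangements in $\Pbb^n$, obtained from the \mustata--Schenck computation of $c(\Omega^1_{\Pbb^n}(\log\cA))$; it says nothing about the germ $(\Cbb^n,\cA_p,0)$, and neither $\csm(\one_{S_\alpha})$ nor a ``localization of $c(\mathscr{Q})$ along $S_\alpha$'' is defined by local analytic data alone in any way you have made precise. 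Concretely: you never say what the stratum-by-stratum expansion of $c(TV)\bigl(1-c(\mathscr{Q})^{-1}\bigr)\cap[V]$ is, so the assertion that ``the stratum-by-stratum contributions agree'' has no content to check. The characteristic-cycle remark points at the right kind of object on the $\csm$ side (the multiplicities of $CC(\one_D)$ along conormals are indeed local), but matching them against Chern classes of the coherent sheaf $\mathscr{Q}$ stratum by stratum is precisely the hard theorem, not a consequence of known results; and the deletion--restriction alternative founders on the fact that deleting a component of a free arrangement need not leave a free (or even locally homogeneous) arrangement, so the induction has no leg to stand on.

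The paper takes a genuinely different route that avoids any local-to-global patching of classes. It chooses an embedded resolution $\rho:\Til V\to V$ obtained by blowing up the intersections of components of $D$ in order of increasing dimension; covariance of $\csm$ plus the known SNC case reduce the theorem to showing that $\rho_*(c(\Der_{\Til V}(D'))\cap[\Til V])=c(\Der_V(D))\cap[V]$, and Lemma~\ref{freeblow} shows each intermediate divisor is again a free hypersurface arrangement, so it suffices to treat one blow-up $\pi:\hat V\to V$ along a center $Z$. The single-blow-up statement (Claim~\ref{mainclaim}) is then proved by applying MacPherson's graph construction to a sheaf map $\sigma:\pi^*\Der_V(D)\to\Der_{\hat V}(D')$ that is an isomorphism off the exceptional divisor; the correction cycles live over $E$, and a dimension count using the trivial subbundle (the Euler derivation) and the epimorphism onto $p^*\Der_Z$ from Lemma~\ref{keylemma} forces their push-forwards to vanish. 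If you want to salvage your reformulation, the paper's \S\ref{remandex} shows the theorem is equivalent to a projection formula for the sheaf $\cO_{J^+D}(D)$ under these blow-ups --- but the author explicitly notes being unable to prove that projection formula directly, which is a good indication that the ``localize and compare'' step you are hoping for is where the real difficulty lives.
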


Here, $\csm(\one_{V\smallsetminus D})$ is the {\em Chern-Schwartz-MacPherson\/}
class of the constructible function $\one_{V\smallsetminus D}$, in the sense of
\cite{MR0361141}, see also~\cite{85k:14004}, Example~19.1.7.

For simple normal crossing divisors, the equality of Theorem~\ref{main} was verified in
\cite{MR1893006} (Proposition~15.3) and \cite{MR2001d:14008}. For free projective
hyperplane arrangements, it is Theorem~4.1 in~\cite{hyparr}, where it is obtained
as a simple corollary of a result of \mustata{} and Schenck (\cite{MR1843320}). 
Theorem~\ref{main}
will be obtained here by considering the blow-ups giving an embedded resolution of 
$D$. Each blow-up will be analyzed by using MacPherson's {\em graph 
construction,\/} showing (Claim~\ref{mainclaim}) that the Chern class of the
corresponding sheaf of logarithmic derivations is preserved by push-forward.
The theorem will then follow from the corresponding behavior of the 
Chern-Schwartz-MacPherson class and from the case of normal crossing
divisors.

In particular, this will give an independent proof (and a substantial generalization)
of the case of free hyperplane arrangements treated in~\cite{hyparr}.
\medskip

The term `hypersurface arrangement' is often used in the literature to simply
mean a union of hypersurfaces (nonsingular or otherwise). This is a substantially 
more general notion than the one used in this note. The statement of 
Theorem~\ref{main} is {\em not\/} true in this generality, even for free divisors.
For example, if $V$ is a surface (so that every reduced divisor is free in $V$),
a condition of local homogeneity is necessary for this result to hold, as 
observed by Xia Liao (cf.~\cite{liao}).  

The paper is organized as follows: in \S\ref{setup} we recall the basic
definitions and reduce the main theorem to showing that Chern classes of
sheaves of logarithmic derivations are preserved through certain types of
blow-ups. This is proven in \S\ref{proof}, using the graph
construction. In \S\ref{remandex} we offer a simple example, and show 
that the theorem is equivalent to a projection formula for Chern classes
of certain coherent sheaves.

A word on the hypotheses: the freeness of the divisor is used crucially in the
application of the graph construction; its local analytic structure is less
essential, but convenient in some coordinate arguments. It is conceivable
that the proof given here may be generalized to divisors satisfying a less
restrictive local homogeneity requirement.

The result in this note generalizes Theorem~4.1 in~\cite{hyparr}. I presented the
results of~\cite{hyparr} in my talk at the Hefei conference on Singularity Theory, 
and I take this opportunity to thank Xiuxiong Chen and Laurentiu Maxim
for the invitation
to speak at the conference and for organizing a very successful and thoroughly
enjoyable meeting.


\section{Set-up}\label{setup}

\subsection{}\label{csmsum}
We work over an algebraically closed field of characteristic~$0$; the reader is welcome
to assume the ground field is $\Cbb$. (Characteristic~$0$ is required in the theory of 
Chern-Schwartz-MacPherson classes. See~\cite{MR1063344} or~\cite{MR2282409} 
for a discussion of the theory over algebraically closed fields of characteristic~$0$.)

Chern-Schwartz-MacPherson classes are classes in the Chow group of a variety $V$
defined for constructible functions on $V$, and are characterized by the normalization requirement that $\csm(\one_V)\in A_*V$ equals $c(TV)\cap [V]$ if $X$ is nonsingular
and the covariance property 
\[
\alpha_* \csm(\varphi)=\csm(\alpha_* \varphi)
\]
for all proper morphisms $\alpha: V \to V'$ and all constructible functions $\varphi$
on $V$.
Over~$\Cbb$, the push-forward of a constructible function is defined by taking
weighted Euler characteristics of fibers: for a subvariety $Z\subseteq V$,
$\alpha_*(\one_Z)(p)=\chi (Z\cap \alpha^{-1}(p))$. 
Thus, $\csm$ determines a natural transformation from the functor of 
constructible functions to the Chow functor. The existence of this natural 
transformation was conjectured by Deligne and Grothendieck, and proved by
MacPherson (\cite{MR0361141}).
The reader may consult Example~19.1.7 in~\cite{85k:14004} for an efficient
summary of MacPherson's definition. An alternative construction is presented
in~\cite{MR2282409}. Interest in these classes has resurged in the past few
years; comparison
with other classes for singular varieties gives an intersection theoretic invariant of 
singularities generalizing directly the Milnor number. A recent survey may be 
found in~\cite{MR2325145}.

\subsection{}
The covariance property of Chern-Schwartz-MacPherson classes has the
following immediate consequence. Let $V$ be a variety, and let $X\subseteq V$
be a subscheme. Let $\rho: \Til V\to V$ be a proper map, and let $X'\subseteq
\Til V$ be any subscheme such that $\rho$ restricts to an isomorphism
$\Til V\smallsetminus X' \to V\smallsetminus X$. Then
\[
\rho_* \csm(\one_{\Til V\smallsetminus X'}) = \csm(\one_{V\smallsetminus X})
\quad.
\]
Indeed, $\rho_*(\one_{\Til V\smallsetminus X'})=\one_{V\smallsetminus X}$.

In particular:

\begin{lemma}\label{csmred}
Let $V$ be a nonsingular complete variety, and let $D\subseteq V$ be a
subscheme. Let $\rho: \Til V \to V$ be a proper morphism such that 
$\Til V$ is nonsingular, and the support $D'$ of $\rho^{-1}(D)$ is a divisor 
with normal crossings and nonsingular components. Then
\[
\csm(\one_{V\smallsetminus D})=\rho_*(c(\Der_{\Til V}(\log D'))
\cap[\Til V])
\quad.
\]
\end{lemma}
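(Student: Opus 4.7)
The plan is to combine two ingredients: the covariance consequence recorded immediately before the lemma, and the already-known case of Theorem~\ref{main} for simple normal crossing divisors. The setup of the lemma is the standard embedded-resolution picture, so I will use (as is implicit) that $\rho$ restricts to an isomorphism $\Til V \smallsetminus D' \to V\smallsetminus D$; this is what allows the covariance formula to apply with $X=D$, $X'=D'$.

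First, apply the displayed covariance identity recalled just above the statement:
\[
\rho_* \csm(\one_{\Til V\smallsetminus D'}) = \csm(\one_{V\smallsetminus D})\quad.
\]
Second, since $\Til V$ is nonsingular and $D'$ is a divisor with normal crossings and nonsingular components, $D'$ is a simple normal crossing divisor in $\Til V$. In this case the equality of Theorem~\ref{main} is already known, by Proposition~15.3 of \cite{MR1893006} (or \cite{MR2001d:14008}), so
\[
\csm(\one_{\Til V\smallsetminus D'}) = c(\Der_{\Til V}(\log D'))\cap[\Til V]\quad.
\]
Pushing this forward by $\rho_*$ and comparing with the first display yields the desired formula.

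There is no real obstacle: the content of the lemma is the observation that, by functoriality of $\csm$, the right-hand side of Theorem~\ref{main} is invariant under passage to any nonsingular modification whose exceptional locus together with the pullback of $D$ is a simple normal crossing divisor, reducing the general case to the SNC case. The only point that merits care is making sure the hypothesis on $D'$ (normal crossings with \emph{nonsingular} components) is exactly what is needed to invoke the SNC version of the theorem in step two; this justifies the absence of any residue/excess contribution along components of $D'$ that would otherwise have to be dealt with separately.
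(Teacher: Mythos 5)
Your proposal is correct and follows essentially the same route as the paper: invoke the known simple normal crossing case to get $\csm(\one_{\Til V\smallsetminus D'}) = c(\Der_{\Til V}(D'))\cap[\Til V]$, then push forward using the covariance identity recorded just before the lemma. Your explicit remark that $\rho$ is implicitly assumed to restrict to an isomorphism off $D'$ is a reasonable clarification of a hypothesis the paper leaves tacit in the resolution setup.
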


\begin{proof}
As recalled in \S\ref{intro}, since $D'$ is a simple normal crossing divisor 
in $\Til V$, then
\[
\csm(\one_{\Til V\smallsetminus D'}) = c(\Omega^1_{\Til V}(\log D'))^\vee)\cap [\Til V]
= c(\Der_{\Til V}(D'))\cap [\Til V]
\]
(cf.~for example \cite{MR2001d:14008}, Theorem~1). The formula follows then 
immediately from covariance.
\end{proof}

\subsection{}
Now let $V$ be a complete nonsingular variety, and let $D$ be a 
hypersurface
arrangement, as in \S\ref{intro}. In particular: at every $p\in D$, there is a choice of
analytic coordinates $x_1,\dots, x_n$ such that the ideal of $D$ in the
completion $k[[x_1,\dots,x_n]]$ is generated by a product of linear polynomials 
$\sum \lambda_i x_i$, defining a central hyperplane arrangement $\cA_p$.

\begin{lemma}
The divisor $D$ is free on $V$ if and only if each $\cA_p$ is a free central
hyperplane arrangement.
\end{lemma}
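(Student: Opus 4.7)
The sheaf $\Der_V(D)$ is coherent, so local freeness can be checked on stalks; at points $p\notin D$ this sheaf agrees with the tangent sheaf $TV$ and is automatically free. So the problem reduces to showing that, for $p\in D$, the stalk $\Der_V(D)_p$ is free over $\cO_{V,p}$ precisely when the central hyperplane arrangement $\cA_p$ is free.

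The bridge between the two conditions is passage to the completion $\widehat{\cO}_{V,p}\cong k[[x_1,\dots,x_n]]$. Since completion is faithfully flat over the Noetherian local ring $\cO_{V,p}$, local freeness of $\Der_V(D)_p$ is equivalent to freeness of its completion; and because $\Der_V(D)$ is defined as the kernel of a map of coherent sheaves (the map sending a derivation $\theta$ to $\theta(f)$ modulo $f$, where $f$ is a local defining equation for $D$), its formation commutes with the flat base change to the completion. By the very definition of a hypersurface arrangement, this base change identifies the completed stalk with the module of formal logarithmic derivations along the defining polynomial of $\cA_p$ inside $k[[x_1,\dots,x_n]]$.

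On the arrangement side, freeness of $\cA_p$ is defined via freeness of $\Der(\cA_p)$ over the polynomial ring $R=k[x_1,\dots,x_n]$. Because $\cA_p$ is central, the defining polynomial is homogeneous and $\Der(\cA_p)$ is a finitely generated graded $R$-module; for such modules freeness over $R$ is equivalent to freeness of the localization at the irrelevant maximal ideal, and, again by faithful flatness of completion, to freeness of the completed module over $k[[x_1,\dots,x_n]]$. Chaining these equivalences with the identification of the previous paragraph yields the lemma.

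I expect the only delicate step to be the compatibility between the formation of $\Der_V(D)$ and completion, i.e.\ the recognition of the completed stalk as the formal derivation module of $\cA_p$. This ultimately reduces to preservation of kernels under flat base change, but it is the step where the hypothesis of local \emph{analytic} (rather than Zariski-local) isomorphism to a hyperplane arrangement is used essentially; the remainder of the argument is a standard graded-to-complete transfer via faithful flatness.
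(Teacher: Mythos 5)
Your argument is correct, but it takes a genuinely different route from the paper's. The paper characterizes both notions of freeness by the same intrinsic criterion: a divisor (resp.\ a central arrangement) is free if and only if its singularity subscheme is empty or Cohen--Macaulay of codimension $2$ (Terao, \cite{MR586451}, Proposition~2.4), and then the transfer between $D$ at $p$ and $\cA_p$ at the origin is a one-line consequence of the fact that a local ring is Cohen--Macaulay if and only if its completion is. You instead transfer freeness of the module $\Der_V(D)_p$ itself: descent of freeness of finitely generated modules along the faithfully flat completion $\cO_{V,p}\to\widehat{\cO}_{V,p}$, compatibility of the kernel presentation $\Der_V(D)=\ker(\Der_V\to\cO_D(D))$ with flat base change (where the local analytic hypothesis enters, as you note --- and the defining equation being a unit times $\prod L_i$ is harmless since $\theta(uF)\in(uF)$ iff $\theta(F)\in(F)$), and on the arrangement side the graded Nakayama principle that a finitely generated graded module over $k[x_1,\dots,x_n]$ is free iff its localization (equivalently, completion) at the irrelevant maximal ideal is free --- this last step uses centrality of $\cA_p$ essentially, exactly where the paper uses that a central arrangement is free iff its singularity subscheme is CM of codimension $2$ \emph{at the origin}. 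The trade-off: the paper's proof is shorter but leans on the nontrivial Cohen--Macaulay criterion for freeness; yours is longer but uses only elementary commutative algebra (flat base change, graded Nakayama) and makes explicit how the module of logarithmic derivations itself behaves under completion, which is closer in spirit to how $\Der_V(D)$ is actually used in the rest of the paper. Both are complete proofs.
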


\begin{proof}
Recall that a divisor in a nonsingular variety $V$ is free if and only if its singularity 
subscheme is empty or Cohen-Macaulay of codimension~$2$ in $V$ at each 
$p\in D$. It follows that a central hyperplane arrangement is free if and only if its 
singularity subscheme is empty or Cohen-Macaulay of codimension~$2$ at the 
origin. (Cf.~\cite{MR586451}, Proposition~2.4.)

The statement then follows from the fact that a local ring is Cohen-Macaulay if 
and only if its completion is (\cite{MR1251956}, Corollary~2.1.8).
\end{proof}

Under the hypotheses of Theorem~\ref{main}, 
$\Der_V(D)$ is locally
free. With $\rho: \Til V\to V$ as in the statement of Lemma~\ref{csmred}, 
$\Der_{\Til V}(D')$ is also locally free, as $D'$ is a divisor with simple
normal crossings. Lemma~\ref{csmred} reduces Theorem~\ref{main} to proving 
that if $D$ is a free hypersurface arrangement in $V$, and $\rho: \Til V\to V$ is as
in the statement of Lemma~\ref{csmred}, then
\begin{equation*}
\tag{$\dagger$}
\rho_*(c(\Der_{\Til V}(D')) \cap [\Til V])
=c(\Der_V(D)) \cap [V]\quad.
\end{equation*}

\subsection{}
Next, we observe that an embedded resolution $\rho$ of a hypersurface 
arrangement~$D$
may be obtained by blowing up at the intersections of the components of the
arrangement, in order of increasing dimension, and that these intersections
are all nonsingular. In order to verify ($\dagger$), it suffices to verify that 
the stated equality holds for each of these blow-ups. More precisely:
Given a hypersurface arrangement~$D$ in a nonsingular variety $V$,
let $Z$ be a component of lowest dimension among the intersections
of components of $D$; let $\pi: \hat V \to V$ be the blow-up of $V$ along $Z$;
let $E$ be the exceptional divisor of this blow-up; and let $D'$ be the
divisor in $\hat V$ consisting of $E$ and the proper transforms of the 
components of $D$.

\begin{lemma}\label{freeblow}
With notation as above, if $D$ is a free hypersurface arrangement,
then so is $D'$.
\end{lemma}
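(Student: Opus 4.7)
The statement is local on $\hat V$, so my plan is to check that $D'$ is locally analytically a free hyperplane arrangement at every point $\hat p \in \hat V$. Away from $E$ the map $\pi$ is an isomorphism, and there is nothing to verify. Fix then $\hat p \in E$ with image $p = \pi(\hat p) \in Z$, and choose local analytic coordinates $x_1, \dots, x_n$ at $p$ so that $Z = \{x_1 = \cdots = x_k = 0\}$ (with $k = \codim_V Z$) and $D$ is cut out by a product of linear forms $\alpha_H = \sum_j \lambda_j^H x_j$ defining the central free arrangement $\cA_p$.

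A first observation I record is that every component of $\cA_p$ contains $Z$ locally: a component meeting $Z$ nontrivially but not containing it would produce an intersection component of dimension strictly less than $\dim Z$, contradicting the minimality of $\dim Z$. Hence each $\alpha_H$ involves only $x_1, \dots, x_k$, and $\cA_p$ is the pullback to $\Abb^n$ of a central essential free arrangement $\overline{\cA}_p$ in $\Abb^k$.

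I then pass to a standard blow-up chart with coordinates $(y_1, u_2, \dots, u_k, x_{k+1}, \dots, x_n)$, in which $x_1 = y_1$ and $x_j = y_1 u_j$ for $j \ge 2$. Placing $\hat p$ at $(0, a_2, \dots, a_k, 0, \dots, 0)$ and setting $v_j = u_j - a_j$, the total transform of $\{\alpha_H = 0\}$ factors as $y_1\bigl(\lambda_1^H + \sum_{j \ge 2} \lambda_j^H u_j\bigr)$, so the components of $D'$ passing through $\hat p$ are $E = \{y_1 = 0\}$ together with the hyperplanes $\{\sum_{j \ge 2} \lambda_j^H v_j = 0\}$ for the $H$ satisfying $\lambda_1^H + \sum_{j \ge 2} \lambda_j^H a_j = 0$. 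This already exhibits the local model of $D'$ at $\hat p$ as a central hyperplane arrangement with smooth components (the proper transform of any component of $D$ containing $Z$ is the blow-up of a smooth variety along a smooth center, and $E$ is clearly smooth).

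The main obstacle is to confirm freeness of this arrangement, and here the argument draws on three standard facts about free hyperplane arrangements. The condition $\lambda_1^H + \sum_{j \ge 2} \lambda_j^H a_j = 0$ picks out exactly the hyperplanes of $\overline{\cA}_p$ containing the line $\ell \subseteq \Abb^k$ spanned by $(1, a_2, \dots, a_k)$, i.e., the localization $(\overline{\cA}_p)_\ell$. Since the hyperplane $\{y_1 = 0\}$ and the hyperplanes $\{\sum_{j \ge 2} \lambda_j^H v_j = 0\}$ depend on disjoint coordinate subsets, the local arrangement at $\hat p$ decomposes as a direct product of $\{y_1 = 0\} \subseteq \Abb^1$ with the image of $(\overline{\cA}_p)_\ell$ in $\Abb^k / \ell \cong \Abb^{k-1}$, times a trivial factor in the $x_{k+1}, \dots, x_n$ directions. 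I then invoke: (i) the product of free arrangements is free; (ii) the localization $(\overline{\cA}_p)_X$ of a free arrangement at any flat $X$ is free (Orlik--Terao), applied with $X = \bigcap_{H \supseteq \ell} H$, for which $(\overline{\cA}_p)_X = (\overline{\cA}_p)_\ell$; and (iii) freeness is preserved under quotienting the ambient space by any subspace of the arrangement's center. Together these give freeness of the local arrangement at $\hat p$, completing the proof.
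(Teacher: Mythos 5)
Your proof is correct, and its first half --- the reduction to a point $\hat p\in E$, the observation that every component of $D$ through $p$ must contain $Z$ locally, and the chart computation showing that the local equation of $D'$ at $\hat p$ is the exceptional coordinate times the linear forms of the hyperplanes of $\cA_p$ containing the direction of $\hat p$ --- coincides with the paper's. Where you take a genuinely different route is the freeness verification. The paper argues directly with Saito's criterion: the product $G$ of the surviving linear forms defines a free divisor because $G$ is the local equation of $D$ at the nearby points $(t,0,\dots,0)$ of $V$ with $t\ne 0$, where $D$ is free by hypothesis; Saito's criterion then yields $n-1$ logarithmic derivations with determinant $G$ that annihilate the exceptional coordinate, and appending the Euler derivation (logarithmic for $\hat x_1 G$ by homogeneity) gives $n$ derivations whose determinant is a unit times $\hat x_1 G$. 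You instead decompose the local model as a product of the one-point arrangement in $\Abb^1$, the essentialization of the localization of $\cA_p$ at the line $\ell$, and an empty arrangement, and quote Orlik--Terao: localizations of free arrangements are free, and products of free arrangements are free. The content is essentially equivalent: the paper's appeal to freeness of $D$ at the points $(t,0,\dots,0)$ is exactly the geometric form of the localization theorem (so the paper gets that step for free from the hypothesis that $D$ is free as a divisor on all of $V$), and its Euler derivation is, modulo the derivations of the other factors, the generator $\hat x_1\,\partial/\partial\hat x_1$ of the $\Abb^1$ factor supplied by your product theorem. Your version is cleaner to state but rests on the nontrivial localization theorem; the paper's is more self-contained and has the side benefit of exhibiting explicitly the Euler derivation, which is reused later in the construction of the distinguished trivial subbundle in the key lemma.
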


\begin{proof}
We can work analytically at a point $p\in V$, so we may assume that $D$ is 
given by a product of linear forms cutting out the center $Z$ at $p$.
We may in fact assume that there are analytic coordinates $x_1,\dots,x_n$
at $p$ so that $Z$ is given by $x_1=\cdots=x_r=0$, and the generator of
the ideal of $D$ is a homogeneous polynomial $F(x_1,\dots,x_r)
=\prod L_i(\underline x)$, with $L_i$ linear.

Let $q\in \hat V$ be a point over $p$. We may choose analytic coordinates 
$(\hat x_1,\hat x_2,\dots, \hat x_n)$ at $q$ so that $\hat x_1=0$ is the 
exceptional divisor, and the blow-up map is given by
\[
(x_1,\dots,x_n) = (\hat x_1,\hat x_1 \hat x_2,\dots, \hat x_1\hat x_r,
\hat x_{r+1},\dots,\hat x_n)\quad.
\]
The ideal for $D'$ at $q$ is then generated by
\[
\hat x_1\, F(1,\hat x_2,\dots, \hat x_r)\quad;
\]
omitting the factors in $F(1,\hat x_2,\dots, \hat x_r)$ that do not vanish at
$q$, we write the generator for $D'$ at $q$ as
\[
\hat x_1\, G(\hat x_2,\dots, \hat x_r)\quad,
\]
where $G$ is a product of linear forms.
In particular, $D'$ is a hypersurface arrangement in $\hat V$. We have to verify
that it is free.

First we note that the divisor defined by $G(\hat x_2,\dots, \hat x_r)$ is
free at $q$: indeed, the hyperplane arrangement defined by $F(x_1,\dots, x_n)$
is free by assumption, and $G(x_2,\dots,x_r)$ generates the ideal of this
arrangement at points $(t,0,\dots,0)$ with $t\ne 0$. By Saito's criterion
(\cite{MR1217488}, Theorem~4.19), $G(\hat x_2,\dots,\hat x_r)$ is the 
determinant of a set of $n-1$ logarithmic derivations $\theta_2,\dots, 
\theta_n$ at $q$.
Since $\theta_2(\hat x_1)=\cdots=\theta_n(\hat x_1)=0$,
these derivations are logarithmic with respect to $\hat x_1\, G(\hat x_2,\dots,
\hat x_r)$.

On the other hand the Euler derivation $\theta_1=\hat x_1\,\partial/\partial {\hat x_1} 
+ \hat x_2\, \partial/\partial{\hat x_2}+\cdots + \hat x_n\, \partial/\partial {\hat x_n}$ 
is logarithmic with respect to $\hat x_1 G$ as this is homogeneous
(cf.~\cite{MR1217488}, Definition~4.7),
and $\det(\theta_1,\dots,\theta_n)$ is a unit multiple of $\hat x_1\, G(\hat x_2,
\dots, \hat x_r)$. This shows that $D'$ is free, again by Saito's criterion.
\end{proof}

\subsection{}
By Lemma~\ref{freeblow}, $\Der_{\hat V}(D')$ is locally free
if $\Der_V(D)$ is, and we may consider its ordinary Chern classes. 
We have reduced the proof of Theorem~\ref{main} to the following statement.

\begin{claim}\label{mainclaim}
Let $D$ be a free hypersurface arrangement on a nonsingular variety $V$;
let $\pi: \hat V\to V$ be the blow-up of $V$ along a component of lowest
dimension of the intersection of components of $D$, and let $D'
=(\pi^{-1}(D))_{\text{red}}$, as above. Then
\[
\pi_*(c(\Der_{\hat V}(D')) \cap [\hat V])=c(\Der_V(D)) \cap [V]\quad.
\]
\end{claim}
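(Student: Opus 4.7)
The plan is to introduce a natural morphism $\phi\colon\Der_{\hat V}(D')\to\pi^*\Der_V(D)$ of locally free rank-$n$ sheaves on $\hat V$ (with $n=\dim V$), and to analyze it via MacPherson's graph construction. The morphism comes from dualizing the pull-back of logarithmic differentials $\pi^*\Omega^1_V(\log D)\to\Omega^1_{\hat V}(\log D')$: if $\omega$ has a simple log pole along a component of $D$ with local equation $f$, then in the analytic coordinates of Lemma~\ref{freeblow} one has $\pi^*f=\hat x_1^m\,\tilde f$, so $\pi^*\omega$ has only simple log poles along $E$ and along the proper transform, i.e.\ along components of $D'=(\pi^{-1}D)_{\text{red}}$. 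Both source and target of $\phi$ are locally free of rank $n$ (by Lemma~\ref{freeblow} and the freeness of $D$). Since $\pi$ is an isomorphism on $\hat V\setminus E$ and $D'$ agrees set-theoretically there with $\pi^{-1}(D)$, the morphism $\phi$ is an isomorphism away from $E$.

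The next step is the graph construction. Form the Grassmannian bundle $\mathbb{G}:=\operatorname{Gr}(n,\,\Der_{\hat V}(D')\oplus\pi^*\Der_V(D))$ over $\hat V$, with tautological rank-$n$ sub-bundle $\mathcal{S}$. For $t\in\Abb^1$, the graph of $t\phi$ defines a section $\hat V\to\mathbb{G}$; closing up this family inside $\mathbb{G}\times\Pbb^1$ gives a scheme $\Gamma$ whose fibers over $0$ and $\infty$ are rationally equivalent in $\mathbb{G}$. The $t=0$ fiber is the zero-graph section, whose pull-back of $\mathcal{S}$ is $\Der_{\hat V}(D')$; the $t=\infty$ fiber consists of a principal section pulling $\mathcal{S}$ back to $\pi^*\Der_V(D)$, together with distinguished components supported over the degeneracy locus of $\phi$, which is contained in $E$. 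Capping the rational equivalence with $c(\mathcal{S})$ and pushing to $\hat V$ yields an identity
\[
c(\Der_{\hat V}(D'))\cap [\hat V] \;=\; c(\pi^*\Der_V(D))\cap [\hat V] + \eta,
\]
where $\eta\in A_*(\hat V)$ is a class supported on $E$ that records the distinguished components.

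The claim then follows upon applying $\pi_*$: the projection formula gives $\pi_*(c(\pi^*\Der_V(D))\cap[\hat V])=c(\Der_V(D))\cap[V]$, so it remains to show $\pi_*\eta=0$. This is the main obstacle, and is where the freeness hypothesis is used essentially. To handle it I would analyze the distinguished components explicitly using the basis of $\Der_{\hat V}(D')$ produced in the proof of Lemma~\ref{freeblow}: the Euler derivation $\theta_1=\sum\hat x_i\,\partial/\partial\hat x_i$ together with lifts $\theta_2,\ldots,\theta_n$ of logarithmic derivations for the residual arrangement $G(\hat x_2,\ldots,\hat x_r)$. Writing $\phi$ in these bases should give a sufficiently explicit description of the distinguished components to show that their contribution to $\eta$ is annihilated by $\pi_*$, exploiting the fact that $\pi|_E\colon E\to Z$ is a $\Pbb^{r-1}$-bundle with $r=\codim_V Z\ge 2$ so that push-forwards of classes on $E$ are controlled by the projective bundle formula. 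Combined with the projection formula for the main term this yields $\pi_*(c(\Der_{\hat V}(D'))\cap[\hat V])=c(\Der_V(D))\cap[V]$, proving the claim.
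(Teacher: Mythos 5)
Your overall strategy is the same as the paper's: compare $\Der_{\hat V}(D')$ and $\pi^*\Der_V(D)$ via a sheaf map that is an isomorphism off $E$ (the paper uses the inverse direction, $\sigma\colon \pi^*\Der_V(D)\to\Der_{\hat V}(D')$, obtained by inverting the isomorphism $\Der_{\hat V}(\hat D^+)\cong\pi^*\Der_V(D^+)$ of Lemma~\ref{isoNC} and restricting; but either direction sets up the graph construction), then show that the distinguished components lying over $E$ contribute nothing after $\pi_*$. The problem is that you leave precisely this last step --- which you correctly flag as ``the main obstacle'' --- as a hope rather than an argument. ``Writing $\phi$ in explicit bases'' does not give you the distinguished cycle $\sum_i a_i[W_i]$: those components are components of a flat limit inside a Grassmannian bundle and are in general not computable by inspection of a matrix. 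Moreover, the bare fact that $E\to Z$ is a $\Pbb^{r-1}$-bundle cannot suffice: the classes $c(\zeta)\cap[W_i]$ have nonzero pieces in every dimension down to $0$, and low-dimensional classes on $E$ certainly need not push forward to zero on $Z$. Some additional structural input about $\zeta$ on the distinguished components is indispensable.

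The paper supplies exactly that input (the second half of Lemma~\ref{keylemma}): the restriction $\sigma|_E$ intertwines a trivial subbundle $\cO_E$ (spanned by the Euler derivation $\hat x_1\hat\partial_1\mapsto x_1\partial_1+\cdots+x_r\partial_r$, which is logarithmic precisely because $D$ is locally a \emph{homogeneous} product of linear forms) with an epimorphism onto $p^*\Der_Z$ (coming from Lemma~\ref{epinc} and the fact that $\partial_{r+1},\dots,\partial_n$ are logarithmic for $D$). Consequently the universal bundle restricted to each distinguished component $W$ sits in a complex of vector bundles $\cO_E\hookrightarrow\zeta|_E\twoheadrightarrow p^*\Der_Z$, so $c(\zeta|_E)=c(p^*\Der_Z)\,c(\xi)$ with $\rk\xi=r-1$. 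The factor $c(p^*\Der_Z)$ is removed by the projection formula, and since $\dim W=n$ while $\xi$ has rank $r-1$, every nonzero component of $c(\xi)\cap[W]$ has dimension $>\dim Z$, hence dies under push-forward to $Z$. This rank-versus-dimension count is the engine of the proof, and it is the piece your proposal is missing; without it (or an equivalent device) the vanishing of $\pi_*\eta$ is not established. A secondary, more minor point: your construction of $\phi$ by pulling back logarithmic one-forms is only justified for forms of the type $df/f$, whereas for a free but non-normal-crossing $D$ the sheaf $\Omega^1_V(\log D)$ is not generated by such forms; it is cleaner to define the comparison map as the paper does, through the normal crossing subarrangement $D^+$ cutting out the center $Z$.
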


The next section is devoted to the proof of this claim, and this will complete 
the proof of Theorem~\ref{main}.


\section{Proof of Theorem~\ref{main}}\label{proof}

\subsection{}\label{graphsec}
We will prove Claim~\ref{mainclaim} as an application of MacPherson's {\em graph 
construction.\/} (We refer the reader to Example~18.1.6 in~\cite{85k:14004} for the 
key facts about the graph construction.) As in \S\ref{setup}, we denote by $Z$ the 
center of the blow-up, $E$ the exceptional divisors; and the natural morphisms as 
in this diagram:
\[
\xymatrix{
E \ar[d]_p \ar[r]^j & \hat V \ar[d]^\pi \\
Z \ar[r]_\iota & V
}
\]
By assumption $Z$ is a nonsingular irreducible subvariety of $V$; we let $r$ be its 
codimension. In a neighborhood of $Z$, $Z$ is the transversal intersection of $r$
components of $D$. The key lemma will be the following:

\begin{lemma}\label{keylemma}
Under the hypotheses of Claim~\ref{mainclaim}:
\begin{itemize}
\item There is a homomorphism of vector bundles $\sigma: \pi^*\Der_V(D)\to 
\Der_{\hat V}(D')$ that is an isomorphism in the complement of~$E$.
\item The restriction of $\sigma$ to $E$ induces 
a morphism of complexes of vector bundles
\[
\xymatrix{
\cO_E \ar[d]_{=} \ar@{^(->}[r] & \pi^*\Der_V(D)|_E \ar[d]^{\sigma|_E} \ar@{->>}[r] &
p^*\Der_Z \ar[d]_{=} \\
\cO_E \ar@{^(->}[r] & \Der_{\hat V}(\hat D)|_E \ar@{->>}[r] & p^*\Der_Z
}
\]
\end{itemize}
\end{lemma}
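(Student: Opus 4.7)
My plan is to work in the local analytic coordinates provided by Lemma~\ref{freeblow}, with $Z = \{x_1 = \cdots = x_r = 0\}$, $D$ cut out by $F(x_1, \ldots, x_r) = \prod L_j$, and coordinates $\hat x_1, \ldots, \hat x_n$ on $\hat V$ satisfying $\pi^* x_1 = \hat x_1$, $\pi^* x_i = \hat x_1 \hat x_i$ for $2 \le i \le r$, and $\pi^* x_j = \hat x_j$ for $j > r$. In these coordinates I construct $\sigma$ as the pullback of vector fields, and verify the diagram on $E$ by explicit computation with distinguished Euler derivations.

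For the first bullet, I would define $\sigma(\theta)$ for $\theta = \sum a_i(x)\,\partial/\partial x_i \in \Der_V(D)$ by the chain rule. A direct computation shows that $\pi^*\theta$ is a priori only a rational derivation on $\hat V$, whose coefficient of $\partial/\partial \hat x_k$ for $2 \le k \le r$ equals $(\pi^* a_k - \hat x_k\,\pi^* a_1)/\hat x_1$; regularity of $\sigma(\theta)$ across $E$ thus reduces to showing $a_i = \theta(x_i) \in (x_1, \ldots, x_r) = I(Z)$ for $i \le r$. The crucial input is that the components $D_i$ of $D$ are distinct smooth hypersurfaces with pairwise coprime local equations $f_i$, so the identity $\theta(\prod f_j) \in (\prod f_j)$ forces $\theta(f_i) \in (f_i)$ for each $i$; hence $\theta$ preserves $I(Z) = (f_{i_1}, \ldots, f_{i_r})$, where the $f_{i_k}$ are the linear forms defining the $r$ components of $D$ meeting transversally at $Z$. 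Once $\sigma(\theta)$ is regular, it is logarithmic along $D'$ because its restriction to $\hat V \smallsetminus E$ agrees with $\theta$ via the isomorphism $\pi$ and being logarithmic along a reduced divisor is a closed condition. The construction is $\pi^{-1}\cO_V$-linear in $\theta$, so it factors through $\pi^*\Der_V(D)$, and is an isomorphism off $E$ since $\pi$ is.

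For the second bullet I would describe the bottom row by way of the standard short exact sequence for the smooth divisor $E \subset \hat V$,
\[
0 \to \cO_E \to \Der_{\hat V}(E)|_E \to T_E \to 0,
\]
with $\cO_E$ (canonically identified with $N_{E/\hat V}^\vee \otimes N_{E/\hat V}$) locally generated by the Euler section $\theta_E = \hat x_1\,\partial/\partial \hat x_1$. The check $\theta_E(\hat x_1 G) = \hat x_1 G$ shows $\theta_E \in \Der_{\hat V}(D')$, yielding $\cO_E \hookrightarrow \Der_{\hat V}(D')|_E$. The surjection $\Der_{\hat V}(D')|_E \twoheadrightarrow p^*\Der_Z$ is the composition of the restriction $\Der_{\hat V}(D')|_E \hookrightarrow \Der_{\hat V}(E)|_E \twoheadrightarrow T_E$ with the differential $T_E \twoheadrightarrow p^*T_Z$; surjectivity follows from the presence of the free derivations $\partial/\partial \hat x_j$ ($j > r$) in $\Der_{\hat V}(D')$, which project to a basis of $p^*\Der_Z$. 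The top row is the $p$-pullback of the analogous picture on $Z$: the radial Euler $\theta_1 = \sum_{i=1}^r x_i\,\partial/\partial x_i$ (logarithmic along $D$ because $F$ is homogeneous in $x_1, \ldots, x_r$) generates the line subsheaf, while the surjection $\pi^*\Der_V(D)|_E = p^*(\Der_V(D)|_Z) \twoheadrightarrow p^*\Der_Z$ comes from $\Der_V(D)|_Z \to T_V|_Z \twoheadrightarrow T_Z$, whose image lies in $T_Z$ precisely because all logarithmic derivations in our setting preserve $I(Z)$.

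Commutativity of the diagram finally reduces to two explicit pullback computations: $\sigma(\theta_1) = \hat x_1\,\partial/\partial \hat x_1 = \theta_E$, so $\sigma$ identifies the two copies of $\cO_E$; and $\sigma(\partial/\partial x_j) = \partial/\partial \hat x_j$ for $j > r$, so $\sigma$ is the identity on the surjections to $p^*\Der_Z$. The zero compositions follow from $\theta_1|_Z = 0$ in $T_Z$ and $\theta_E|_E = 0$ in $T_E$. I expect the main hurdle to be the regularity argument in the first bullet, whose backbone is the arrangement-theoretic fact that logarithmic derivations preserve the defining ideal of every flat. A secondary subtlety is showing that the local description of the top $\cO_E$ is independent of coordinates; this follows by observing that any coordinate change compatible with the arrangement structure near $Z$ may be taken linear in $x_1, \ldots, x_r$, under which $\theta_1$ is preserved modulo derivations tangent to $Z$.
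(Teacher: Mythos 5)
Your proposal follows, in substance, the same route as the paper. The paper packages the construction of $\sigma$ invariantly: it first identifies $\pi^*\Der_V(D^+)$ with $\Der_{\hat V}(\hat D^+)$ for the normal crossing subdivisor $D^+=D_1\cup\cdots\cup D_r$ cutting out $Z$ (Lemma~\ref{isoNC}), and then sandwiches $\pi^*\Der_V(D)\subseteq\Der_{\hat V}(\hat D)$ inside it using the defining exact sequence (Lemma~\ref{sandwich}); you build the same map directly in coordinates as the extension of $(d\pi)^{-1}$ across $E$, with regularity coming from the fact that logarithmic derivations preserve the equations of the components and hence the ideal of $Z$. Your treatment of the second bullet (the Euler derivations $\sum_{i\le r}x_i\partial_i$ and $\hat x_1\hat\partial_1$ generating the trivial subbundles, the derivations $\partial_{r+1},\dots,\partial_n$ giving the surjections onto $p^*\Der_Z$, and the identification of the two rows under $\sigma$) is essentially identical to \S\ref{proof2} of the paper.

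One step of your first bullet is justified incorrectly. You claim that once $\sigma(\theta)$ is regular it is automatically logarithmic along $D'$ because it is so off $E$ and ``being logarithmic along a reduced divisor is a closed condition.'' That density argument only controls the components of $D'$ that meet $\hat V\smallsetminus E$, namely the proper transforms; it says nothing about the component $E$ itself, which is disjoint from the open set on which you know anything. Concretely, writing the reduced equation of $D'$ as $\hat x_1 H$, a regular vector field $\delta$ satisfying $\delta(\hat x_1 H)\in(\hat x_1 H)$ off $E$ is only forced to satisfy $\delta(H)\in(H)$; tangency to $E$, i.e.\ $\delta(\hat x_1)\in(\hat x_1)$, is a genuinely extra condition. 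The gap is immediately filled by material you already have: $\sigma(\theta)(\hat x_1)=\pi^*(\theta(x_1))$, and $\theta(x_1)\in(x_1)$ because $x_1$ is a local equation of one of the components through $Z$ and you have already shown $\theta(f_i)\in(f_i)$ for every component --- the very fact you used to prove regularity. State the tangency to $E$ as part of that computation rather than appealing to a closedness principle; with that repair the argument is complete and matches the paper's.
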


\noindent The monomorphisms and epimorphisms shown in this diagram
will be defined in the course of the proof of Lemma~\ref{keylemma}, in~\S\ref{proof2};
the monomorphisms will be monomorphisms of vector bundles.

Claim~\ref{mainclaim} follows from Lemma~\ref{keylemma}, as we now show.
Applying the graph construction to $\sigma$ yields a cycle $\sum_i a_i [W_i]$ 
of dimension $n=\dim \hat V$ in the Grassmannian 
$G=\text{Grass}_n(\pi^*\Der_V(D)\oplus \Der_{\hat V}(D'))$ over $\hat V$.
Since $\sigma$ is an isomorphism off $E$, 
\[
c(\pi^*\Der_V(D))\cap [\hat V] - c(\Der_{\hat V}(D'))\cap [\hat V]
=\sum_{W_i \to E} a_i \eta_{i*}(c(\zeta)\cap [W_i])\quad,
\]
where $\eta_i: W_i\to \hat V$ are the maps induced by projection, and
$\zeta$ is the rank-$n$ universal bundle on $G$. (See (c) in Example~18.1.6 
of~\cite{85k:14004}.) Pushing forward to $V$, and since $\Der_V(D)$ is assumed
to be locally free,
\[
c(\Der_V(D))\cap [V] - \pi_* (c(\Der_{\hat V}(D'))\cap [\hat V])
=\sum_{W_i \to E} a_i \pi_*\eta_{i*}(c(\zeta)\cap [W_i])\quad.
\]
Therefore, in order to verify Claim~\ref{mainclaim} it suffices to prove that
$\pi_*\eta_*(c(\zeta)\cap [W])=0$
for every component $W=W_i$ projecting into $E$ via $\eta=\eta_i$. We let
$\underline \eta$ be the morphism $W\to E$:
\[
\xymatrix{
W \ar[d]_{\underline\eta} \ar[dr]^\eta \\
E \ar[r]_j \ar[d]_p & \hat V \ar[d]^\pi \\
Z \ar[r]_\iota & V
}
\]
We have $\pi\circ\eta=\iota\circ p\circ \underline \eta$. Thus it suffices to show that
\[
p_*{\underline\eta}_*(c(\zeta)\cap [W])=0\quad.
\]
The component $W$ lies in the restriction 
$G|_E=\text{Grass}_n(\pi^*\Der_V(D)|_E\oplus \Der_{\hat V}(D')|_E)$, and $\zeta$
restricts to the universal bundle $\zeta|_E$ on $G|_E$; $c(\zeta)\cap [W]
=c(\zeta|_E)\cap [W]$ by functoriality of Chern classes. By the second part of 
Lemma~\ref{keylemma}, the restriction of $\zeta|_E$ to each component $W$ 
is the middle term in a complex of vector bundles
\[
\xymatrix{
\cO_E \ar@{^(->}[r] & \zeta|_E \ar@{->>}[r] & p^*\Der_Z\quad.
}
\]
It follows that $c(\zeta|_E)=c(p^*\Der_Z)\, c(\xi)$,
where $\xi=\ker(\zeta|_E \to p^*\Der_Z)/\cO_E$ is the homology of this complex. 
By the projection formula,
\[
p_*\underline\eta_*(c(\zeta)\cap [W])=c(\Der_Z)\cap p_*\underline\eta_*(c(\xi)\cap [W])
\quad.
\]
Since $\dim W=\dim V$ and $\xi$ has rank $=\codim_ZV-1$, the nonzero components
of $c(\xi)\cap [W]$ have dimension $>\dim Z$, and therefore
$p_*\underline\eta_*(c(\xi)\cap [W])=0$. It follows that 
$\pi_*\eta_*(c(\zeta)\cap [W])=0$ as needed.
\qed\medskip

\subsection{}
We are thus reduced to proving Lemma~\ref{keylemma}.
Recall that $Z$ denotes the codimension~$r$, nonsingular center of the blow-up. 
We will use the following notation:
\begin{itemize}
\item By assumption, there exist $r$ components $D_1,\dots,D_r$ of $D$ such that 
$Z$ is a connected component of $D_1\cap\cdots \cap D_r$. 
We will denote by $D^+$ the union of $D_1,\dots,D_r$. Note that $D^+$ is a 
divisor with normal crossings in a neighborhood of $Z$.
\item $\hat D$ will denote $\pi^{-1}(D)$, so that $D'=\hat D_{\text{red}}$.
\item Similarly, $\hat D^+$ will be $\pi^{-1}(D^+)$. 
\end{itemize}

\begin{remark}\label{redrem}
The difference between a divisor and its reduction is immaterial here
(in characteristic zero).
For a divisor $A$ in a nonsingular variety $V$, the sections of the sheaf $\Der_V(A)$ 
may be defined as those derivation which send a section $F$ corresponding to  
$A$ to a multiple of $F$: in other words, there is an exact sequence
\[
\xymatrix{
0 \ar[r] & \Der_V(A) \ar[r] & \Der_V \ar[r] & \cO_A(A)
}
\]
where (locally) the last map applies a given derivation to $F$
(see e.g.~\cite{MR2359100}, \S2). It is straightforward
to verify that if $\partial$ is a derivation, and $F_\text{red}$ consists of the factors
of $F$ taken with multiplicity~$1$, then $\partial(F)\in (F)$ if and only if
$\partial(F_\text{red})\in (F_\text{red})$. 
Thus $\Der_V(A)$ and $\Der_V(A_\text{red})$ coincide as subsheaves of
$\Der_V$. Therefore, we may use $\hat D$ in place of $D'$, and we don't
need to bother making a distinction between $\hat D^+$ and its reduction.
\qede\end{remark}

\begin{remark}
We recall the following useful description of $\Der_V(D)$ (cf.~\cite{MR1217488},
Proposition~4.8): if $D$ is the union of distinct components $D_i$, then
$\Der_V(D)=\cap_i \Der_V(D_i)$ within $\Der_V$. Indeed, it suffices
to prove that $\Der_V(A\cup B)=\Der_V(A)\cap \Der_V(B)$ if $A$ and $B$
have no components in common. This amounts to the statement that if $F$ 
and $G$ have no common factors, then $\partial(FG)\in (FG)$ if and only if 
$\partial(F)\in (F)$ and $\partial(G)\in (G)$ for all derivations $\partial$, which
is immediate.
\qede\end{remark}

\begin{remark}\label{inclus}
In particular, if a divisor $A$ consists of a selection of the components of~$D$, 
then $\Der_V(D)\subseteq \Der_V(A)$.
Therefore, we have inclusions
\[
\Der_V(D)\subseteq \Der_V(D^+)\quad,\quad
\Der_{\hat V}(\hat D)\subseteq \Der_{\hat V}(\hat D^+)
\quad.
\]
Further, the monomorphism $\Der_V(D)\hookrightarrow \Der_V(D^+)$
of locally free sheaves
remains a monomorphism after pull-back via $\pi$: the determinant of this
morphism is nonzero on $V$, and it remains nonzero on the blow-up $\hat V$.
\qede\end{remark}

\begin{lemma}\label{isoNC}
The (reduction of the) divisor $\hat D^+$ is a divisor with normal crossings in a 
neighborhood of $E$, and $\pi^* \Der_V(D^+)\cong \Der_{\hat V}(\hat D^+)$.
\end{lemma}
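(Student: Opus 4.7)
The plan is to verify both assertions by an explicit local calculation at a point $q\in E$ lying over $p\in Z$. Since $D^+$ is the union of $r$ nonsingular components meeting transversally along $Z$, I may choose analytic coordinates $x_1,\dots,x_n$ at $p$ so that $Z=\{x_1=\cdots=x_r=0\}$ and so that $D^+$ has local equation $x_1\cdots x_r$. Away from $Z$ the map $\pi$ is an isomorphism and both claims are trivial, so it is enough to work in an analytic neighborhood of $p$.

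For the first assertion, I would describe $\pi$ in the $r$ standard affine charts. In the $j$-th chart ($1\le j\le r$) there are coordinates $\hat x_1,\dots,\hat x_n$ on $\hat V$ with $x_j=\hat x_j$, $x_i=\hat x_j\hat x_i$ for $i\le r$ and $i\ne j$, and $x_i=\hat x_i$ for $i>r$, so that $E$ is cut out by $\hat x_j$. The pullback ideal of $D^+$ is then generated by $\hat x_j^{\,r}\prod_{i\le r,\,i\ne j}\hat x_i$, whose radical is $\prod_{i\le r}\hat x_i$: this cuts out a normal crossing divisor in the chart, with $E$ appearing as one of its smooth components. Since the $r$ charts cover a neighborhood of $E$, this yields the first assertion.

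For the isomorphism of logarithmic sheaves, I would construct a natural morphism $\pi^*\Omega^1_V(\log D^+)\to\Omega^1_{\hat V}(\log \hat D^+)$ by pulling back $1$-forms. The pullback formulas $\pi^*(dx_j/x_j)=d\hat x_j/\hat x_j$, $\pi^*(dx_i/x_i)=d\hat x_j/\hat x_j+d\hat x_i/\hat x_i$ (for $i\le r$ and $i\ne j$), and $\pi^*(dx_i)=d\hat x_i$ (for $i>r$) show that a rational $1$-form with at worst simple poles along $D^+$ pulls back to a rational $1$-form with at worst simple poles along $\hat D^+$; this defines the map. Dualizing yields a morphism $\Der_{\hat V}(\hat D^+)\to\pi^*\Der_V(D^+)$ of locally free rank-$n$ sheaves near $E$, so it suffices to check this is an isomorphism, equivalently on the level of $\Omega^1(\log\,\cdot\,)$.

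Expressing the basis $\{\pi^*(dx_i/x_i)\}_{i\le r}\cup\{\pi^*(dx_i)\}_{i>r}$ of $\pi^*\Omega^1_V(\log D^+)$ in the basis $\{d\hat x_i/\hat x_i\}_{i\le r}\cup\{d\hat x_i\}_{i>r}$ of $\Omega^1_{\hat V}(\log \hat D^+)$ via the formulas above produces a matrix that is upper triangular with $1$'s on the diagonal, the only off-diagonal nonzero entries being the $1$'s in the $j$-th row. This matrix is invertible, so the map is an isomorphism in each chart, hence on a neighborhood of $E$. The main obstacle in the proof is purely notational bookkeeping across the $r$ charts; once the pullback matrix has been written down, its invertibility is transparent.
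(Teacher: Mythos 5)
Your proof is correct and follows essentially the same route as the paper, which disposes of the lemma by the same local-coordinate verification in the standard charts of the blow-up (deferring the details to Theorem~4.1 of~\cite{MR2504753}); your write-up simply makes the computation explicit. One cosmetic point: the change-of-basis matrix you describe is not literally upper triangular when $j>1$ (the extra $1$'s in row $j$ sit in columns both above and below the diagonal), but it is of the form $I+e_j v^T$ with $v^Te_j=0$, hence unipotent with determinant $1$, so the conclusion stands.
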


\begin{proof}
The first assertion is a simple verification in local coordinates. The second
assertion only need be verified in a neighborhood of $E$, so it reduces to 
the case of normal crossings, where it is straightforward. More details may
be found in Theorem~4.1 of~\cite{MR2504753}. (Also cf.~Lemma~1.3
in~\cite{MR2600139}.)
\end{proof}

We may use the isomorphism obtained in Lemma~\ref{isoNC} to identify
$\pi^* \Der_V(D^+)$ and $\Der_{\hat V}(\hat D^+)$. Via this identification,
we will verify that $\pi^*\Der_V(D)$ is contained into $\Der_{\hat V}(\hat D)$.
The corresponding monomorphism of locally free sheaves $\pi^*\Der_V(D)
\hookrightarrow \Der_{\hat V}(\hat D)=\Der_{\hat V}(D')$ will give the 
homomorphism $\sigma$ whose existence is claimed in Lemma~\ref{keylemma}.

Note that the sought-for $\sigma$ appears to go in the wrong direction. 
The differential of $\pi$
maps $\Der_{\hat V}$ to $\pi^* \Der_V$, and restricts to a homomorphism
$\Der_{\hat V}(D^+)\rightarrow \pi^* \Der_V(D^+)$. This is an isomorphism
as observed in Lemma~\ref{isoNC}, and the claim here is that its {\em inverse\/} 
restricts to a morphism
\[
\xymatrix{
\sigma: \pi^*\Der_V(D)\ar[r] & \Der_{\hat V}(\hat D)\quad,
}
\]
which will then clearly be an isomorphism off $E$ as needed in~\S\ref{graphsec}.

\begin{lemma}\label{sandwich}
In $\pi^* \Der_V(D^+)\cong \Der_{\hat V}(\hat D^+)$, $\pi^*\Der_V(D)
\subseteq \Der_{\hat V}(\hat D)$.
\end{lemma}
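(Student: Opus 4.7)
The plan is to make the isomorphism of Lemma~\ref{isoNC} concrete and then to transfer the logarithmic condition from $D$ to $\hat D$ by a short chain-rule computation. The isomorphism $\pi^*\Der_V(D^+)\cong \Der_{\hat V}(\hat D^+)$ is (the inverse of) the differential $d\pi$. Concretely, for $\partial \in \Der_V(D^+)$ and its preimage $\hat\partial \in \Der_{\hat V}(\hat D^+)$, the defining relation $d\pi(\hat\partial)=\pi^*\partial$ reads, in local coordinates $x_1,\dots,x_n$ on $V$ with $\partial = \sum_i a_i\,\partial/\partial x_i$, as $\hat\partial(\pi^*x_i)=\pi^*(a_i)$ for each $i$. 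Expanding an arbitrary regular function $g$ on $V$ as a power series in the $x_i$, the chain rule yields
\[
\hat\partial(\pi^*g)=\pi^*(\partial g)\quad,
\]
valid for every such $g$.

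Now suppose $\partial \in \Der_V(D)$. By Remark~\ref{inclus} we have $\partial \in \Der_V(D^+)$, so the lift $\hat\partial \in \Der_{\hat V}(\hat D^+)$ is defined. Let $F$ be a local equation for $D$; since $\partial$ is logarithmic along $D$, there is $h \in \cO_V$ with $\partial F = hF$. Applying the displayed identity with $g = F$ gives
\[
\hat\partial(\pi^*F) = \pi^*(\partial F) = \pi^*(h)\cdot \pi^*(F) \in (\pi^*F)\quad.
\]
By Remark~\ref{redrem}, the possibly non-reduced $\pi^*F$ may be used as the local defining equation of $\hat D$ when testing the logarithmic property; hence $\hat\partial \in \Der_{\hat V}(\hat D)$, which gives $\pi^*\Der_V(D) \subseteq \Der_{\hat V}(\hat D)$, as claimed.

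The main subtlety is directional: $d\pi$ naturally sends $\Der_{\hat V}$ into $\pi^*\Der_V$, so producing a lift $\hat\partial$ from $\partial$ requires \emph{inverting} $d\pi$. Lemma~\ref{isoNC} supplies this inverse precisely on the right subsheaves, namely those corresponding to the normal-crossing auxiliary divisor $\hat D^+$; this is why it is important to introduce $D^+$ as an intermediate object rather than attempting to lift derivations directly with respect to $D$. Once the lift is in hand, the transfer of the logarithmic property along $\pi$ is an essentially one-line chain-rule computation, so no further local coordinate analysis is required.
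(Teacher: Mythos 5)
Your proposal is correct and takes essentially the same route as the paper: both arguments hinge on Lemma~\ref{isoNC} to produce the lift $\hat\partial$, on Remark~\ref{redrem} to ignore non-reducedness, and on the fact that evaluating a derivation on a defining equation commutes with $\pi^*$ --- which is exactly what the paper expresses more abstractly by pulling back the defining exact sequence $\Der_V(D)\to\Der_V(D^+)\to\cO_D(D)$ and identifying $\Der_{\hat V}(\hat D)$ as the kernel of the pulled-back evaluation map. Your chain-rule computation is the coordinate form of that same observation.
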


\begin{proof}
By definition of $\Der_V(D)$ there is an exact sequence
\begin{equation}
\tag{$\dagger$}
\xymatrix{
\Der_V(D) \ar[r] & \Der_V(D^+) \ar[r] & \cO_D(D)
}
\end{equation}
where the first map is a monomorphism, and the second applies a given logarithmic
derivation to a section~$F$ defining $D$. Pulling back to $\hat V$ gives a complex
\begin{equation}
\tag{$\ddagger$}
\xymatrix{
\pi^*\Der_V(D) \ar[r] & \pi^*\Der_V(D^+)\cong \Der_{\hat V}(\hat D^+) 
\ar[r] & \pi^*\cO_D(D)\cong \cO_{\hat D}(\hat D)
}
\end{equation}
The first map remains a monomorphism (Remark~\ref{inclus}), and maps
$\pi^*\Der_V(D)$ into the kernel of the second map, which is $\Der_{\hat V}(\hat D)$
by definition of the latter.
\end{proof}

This completes the proof of the first part of Lemma~\ref{keylemma}.
Note that $\sigma$ is a monomorphism of sheaves, not of vector bundles.

\begin{example}\label{threelinesex}
Let $V=\Pbb^2$, and let $D$ be the divisor consisting of three distinct concurrent
lines. We blow-up at the point of intersection $p$:
\begin{center}
\includegraphics[scale=.6]{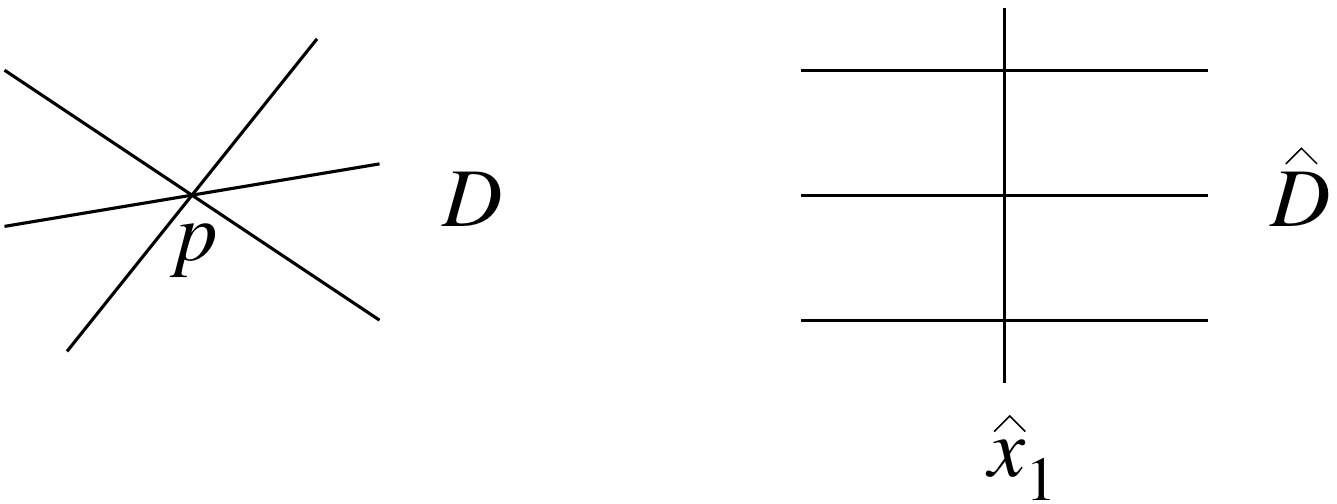}
\end{center}
In affine coordinates centered at $p$, we may assume $D$
has equation $F=x_1x_2 (x_1+x_2)=0$. We choose coordinates 
$\hat x_1, \hat x_2$ in an affine chart in the blow-up $\hat V$ so that the 
blow-up map is given by
\[
x_1=\hat x_1\quad,\quad x_2=\hat x_1\hat x_2\quad;
\]
the exceptional divisor $E$ has equation $\hat x_1=0$, and $\hat D$ is given by
the vanishing of $\hat F=\hat x_1^3 \hat x_2(1+\hat x_2)$ (the fourth component
is at $\infty$ in this chart); it is a divisor with normal crossings.

We work in the local rings $R$, $\hat R$ at $(0,0)$ in both $V$ and $\hat V$.
We can let $D^+$ be the divisor $x_1x_2=0$, so that $\hat D^+$ has ideal $(\hat x_1^2
\hat x_2)$. Bases for $\Der_V(D^+)$, $\Der_{\hat V}(\hat D^+)$ are
\[
\langle x_1\partial_1, x_2\partial_2\rangle\quad,\quad
\langle \hat x_1\hat \partial_1, \hat x_2\hat\partial_2\rangle
\]
where $\partial_i = \partial/\partial x_i$, $\hat \partial_i = \partial/\partial\hat x_i$,
and the isomorphism $\pi^*\Der_V(D^+) \overset\sim\to \Der_{\hat V}(\hat D^+)$
maps $\pi^*(x_1\partial_1)$ to $\hat x_1\hat\partial_1-\hat x_2\hat \partial_2$
and $\pi^*(x_2\partial_2)$ to $\hat x_2 \hat \partial_2$, as the reader may
verify. A derivation $a_1(x) x_1\partial_1+a_2(x) x_2\partial_2$ is in $\Der_V(D)$ iff
\[
(a_1(x) x_1\partial_1+a_2(x) \partial_2)(x_1x_2 (x_1+x_2))
=a_1(x) F+a_1(x) x_1^2 x_2 +a_2(x) F+a_2(x) x_1 x_2^2\in (F),
\]
that is, iff
\[
a_1(x) x_1+ a_2(x) x_2\in (x_1+x_2)\quad.
\]
It follows that a basis for $\Der_V(D)$ is
\[
\langle x_1\partial_1+ x_2\partial_2, (x_1+x_2)x_2\partial_2\rangle\quad,
\]
and we may represent sequence ($\dagger$) at $(0,0)$ as
\[
\xymatrix@C=75pt{
R\oplus R \ar[r]^-{\left(\begin{smallmatrix} 
1 & 0 \\
1 & x_1+x_2
\end{smallmatrix}\right)} & R\oplus R \ar[r]^-{\left(\begin{smallmatrix}
x_1^2 x_2 & x_1 x_2^2
\end{smallmatrix}\right)} & R/(F)
}
\]
Tensoring by $\hat R$ gives the corresponding sequence ($\ddagger$):
\[
\xymatrix@C=75pt{
\hat R\oplus \hat R \ar[r]^-{\left(\begin{smallmatrix} 
1 & 0 \\
1 & \hat x_1 (1+\hat x_2)
\end{smallmatrix}\right)} & \hat R\oplus \hat R \ar[r]^-{\left(\begin{smallmatrix}
\hat x_1^3 \hat x_2 & \hat x_1^3 \hat x_2^2
\end{smallmatrix}\right)} & \hat R/(\hat F)
}
\]
which realizes $\pi^* \Der_V(D)$ as a submodule of
\[
\Der_{\hat V}(\hat D)
=\ker(\begin{pmatrix} 
\hat x_1^3 \hat x_2 & \hat x_1^3 \hat x_2^2
\end{pmatrix})
=\im\left(
\begin{pmatrix} 
1 & 0 \\
1 & 1+\hat x_2
\end{pmatrix}
\right)
\]
In these coordinates, a matrix representation for 
$\pi^*\Der_V(D) \hookrightarrow \Der_{\hat V}(\hat D)$
is evidently $\left(\begin{smallmatrix}
1 & 0 \\
0 & \hat x_1
\end{smallmatrix}\right)$.
\qede
\end{example}

\begin{remark}
Example~\ref{threelinesex} illustrates the general local situation: dualizing 
Proposition~4.5 in \cite{MR1484696}, one sees that one may always choose local
coordinates and bases for $\Der_V(D)$, $\Der_{\hat V}(\hat D)$ so that the 
matrix of $\pi^*\Der_V(D) \hookrightarrow \Der_{\hat V}(\hat D)$ is diagonal, with
entries given by powers of the equation for the exceptional divisor.
This will not be needed in the following, but it is a useful model to keep in mind
in reading what follows.
\qede
\end{remark}

\subsection{}\label{proof2}
We are left with the task of proving the second part of Lemma~\ref{keylemma},
which amounts to the existence of a certain trivial subbundle and an epimorphism 
to $p^*\Der_Z$ for both $\pi^*\Der_V(D)$ and $\Der_{\hat V}(\hat D)$.
We will prove that there is a commutative diagram of locally free sheaves on $E$:
\[
\xymatrix{
\cO_E \ar@{^(->}[rr] \ar[dr] & & \Der_{\hat V}(\hat D)|_E \ar[dr] \\
&  \pi^*\Der_V(D)|_E \ar@{->>}[rr] \ar[ur]_{\sigma|_E} & & p^* \Der_Z
}
\]
such that the composition $\cO_E \to p^*\Der_Z$ is the zero morphism.
The top horizontal morphism will be a monomorphism of vector bundles,
and it follows from the commutativity of the diagram that so is the leftmost
slanted morphism. Similarly, the bottom horizontal morphism will be an
epimorphism, and it follows that so is the rightmost slanted morphism.
Thus, the full statement of Lemma~\ref{keylemma} follows from the 
existence of this diagram.

\subsubsection{} We deal with the epimorphism side first.
According to our hypotheses, the center $Z$ of the blow-up is the transversal
intersection (in a neighborhood of~$Z$) of the $r$ components of $D^+$, and is
contained in the other components of $D$. 
As $Z\subseteq V$, we have a natural embedding of $Der_Z\cong TZ$ as the
kernel of the natural map from $\Der_V|_Z\cong TV|_Z$ to the normal bundle
$N_ZV$.

\begin{lemma}\label{epinc}
There is an exact sequence of vector bundles
\[
\xymatrix{
0 \ar[r] & \cO_Z^{\oplus r} \ar[r] & \Der_V(D^+)|_Z \ar[r] & \Der_V|_Z
\ar[r] & N_ZV \ar[r] & 0 \quad.
}
\]
In particular, there is an epimorphism $\Der_V(D^+)|_Z \twoheadrightarrow
\Der_Z$.
\end{lemma}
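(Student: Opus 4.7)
My plan is to establish the exact sequence by working in local analytic coordinates adapted to the normal crossings structure of $D^+$ near $Z$, and then to check that the local pieces glue. By hypothesis, in a neighborhood of $Z$ the $r$ components $D_1,\dots,D_r$ of $D^+$ meet transversally along $Z$. At each point of $Z$ we may therefore choose analytic coordinates $x_1,\dots,x_n$ so that $D_i=\{x_i=0\}$ for $i=1,\dots,r$ and $Z=\{x_1=\cdots=x_r=0\}$. In such coordinates, $\Der_V(D^+)$ is freely generated as an $\cO_V$-module by $x_1\partial_1,\dots,x_r\partial_r,\partial_{r+1},\dots,\partial_n$, where $\partial_i=\partial/\partial x_i$.

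The middle morphism $\Der_V(D^+)|_Z\to\Der_V|_Z$ is simply the restriction to $Z$ of the natural inclusion $\Der_V(D^+)\hookrightarrow\Der_V$, and the rightmost morphism $\Der_V|_Z\twoheadrightarrow N_ZV$ is the canonical surjection from the normal bundle sequence of $Z\subseteq V$. For the leftmost morphism, I send the $i$-th standard basis section of $\cO_Z^{\oplus r}$ to the restriction of $x_i\partial_i$. Exactness is then immediate from the coordinate description: the inclusion $\Der_V(D^+)\hookrightarrow\Der_V$ has matrix $\mathrm{diag}(x_1,\dots,x_r,1,\dots,1)$ in the bases chosen, so upon restriction to $Z$ the elements $(x_i\partial_i)|_Z$ ($i\le r$) give a frame for the kernel of the middle map, while $\partial_{r+1}|_Z,\dots,\partial_n|_Z$ give a frame for its image, which is exactly $TZ$, the kernel of $TV|_Z\twoheadrightarrow N_ZV$. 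Surjectivity on the right and injectivity on the left follow at once from the coordinate picture.

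The step I expect to require the most care is verifying that the leftmost map is canonical, so that the local constructions glue to a well-defined morphism of sheaves on $Z$. The key point is that each component $D_i$ is intrinsic to $D^+$ near $Z$: if $f_i$ is any local defining equation for $D_i$, then $f_i\partial\in\Der_V(D^+)$ for any derivation $\partial$ with $\partial(f_i)\equiv 1\pmod{(f_i)}$, and this element descends to a class in $\Der_V(D^+)|_Z$ that depends only on $D_i$. Indeed, replacing $f_i$ by a unit multiple, or altering $\partial$, modifies the representative by a section lying in $\sum_{j=1}^r f_j\,\Der_V(D^+)$, which vanishes upon restriction to $Z$. (Alternatively, one may obtain the sequence by dualizing the residue sequence $0\to\Omega^1_V\to\Omega^1_V(\log D^+)\to\bigoplus_{i=1}^r\cO_{D_i}\to 0$ and restricting to $Z$, where each $\cO_{D_i}|_Z=\cO_Z$; but one then has to be careful about the exactness of the restriction.) With the exact sequence established, the promised epimorphism $\Der_V(D^+)|_Z\twoheadrightarrow\Der_Z$ is recovered by factoring the middle map through its image $TZ=\Der_Z$ inside $\Der_V|_Z$.
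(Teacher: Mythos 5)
Your proof is correct in outline but takes a genuinely different route from the paper's. The paper derives the whole four-term sequence homologically: it starts from the canonical sequence $0\to\Der_V(D^+)\to\Der_V\to\cO_{D^+}(D^+)$, identifies the image of the last map near $Z$ with $\oplus_{i=1}^r\cO_{D_i}(D_i)$, tensors with $\cO_Z$, and recognizes the kernel term as $\Tor_1^{\cO_V}(\cO_Z,\oplus_i\cO_{D_i}(D_i))\cong\cO_Z^{\oplus r}$. That approach buys canonicity for free: the trivialization of the kernel is forced by the $\Tor$ computation, with no gluing to check. Your approach, by explicit adapted frames $x_1\partial_1,\dots,x_r\partial_r,\partial_{r+1},\dots,\partial_n$, is more elementary and makes the exactness transparent (the paper in fact records exactly this coordinate picture afterwards, in its Remark~\ref{coordepi}), but it puts the entire burden on the gluing step, which you correctly identify as the delicate point.

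One claim in your gluing argument is imprecise as stated: it is \emph{not} true that $f_i\partial\in\Der_V(D^+)$ for an arbitrary derivation $\partial$ with $\partial(f_i)\equiv 1\pmod{(f_i)}$. For instance, with $r=2$ and $\partial=\partial_1+\partial_2$ one has $(x_1\partial)(x_2)=x_1\notin(x_2)$, so $x_1\partial\notin\Der_V(D^+)$, and such an element does not even define a class in $\Der_V(D^+)|_Z$. You must additionally require $\partial$ to be logarithmic along the remaining components, i.e.\ $\partial(f_j)\in(f_j)$ for $j\neq i$. With that hypothesis the argument does close up: if $f_i'=uf_i$ and $\partial'$ is another such derivation, then $\partial'(f_i)\equiv u^{-1}\pmod{(f_i)}$, hence $\partial-u\partial'\in\Der_V(D^+)$ and $f_i\partial-f_i'\partial'=f_i(\partial-u\partial')\in I_Z\cdot\Der_V(D^+)$, which dies upon restriction to $Z$, giving the well-defined map $\cO_Z^{\oplus r}\to\Der_V(D^+)|_Z$ you need. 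With that repair your proof is complete; alternatively, the paper's $\Tor$ argument sidesteps the issue entirely.
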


\begin{proof}
We have (see Remark~\ref{redrem}) an exact sequence
\[
\xymatrix{
0 \ar[r] & \Der_V(D^+) \ar[r] & \Der_V \ar[r] & \cO_{D^+}(D^+)
}
\]
The image of the rightmost map is the ideal of $\cO_{D^+}(D^+)$ defined
locally by the partials of a generator for the ideal of $D^+$. Near $Z$, where
$Z$ is the complete intersection of $D_1,\dots,D_r$, it is
easy to verify that this ideal is isomorphic to $\oplus_{i=1}^r \cO_{D_i}(D_i)$.
Thus, tensoring by $\cO_Z$ gives an exact sequence
\[
\xymatrix@C=18pt{
0 \ar[r] & \Tor_1(\cO_Z,\oplus_{i=1}^r \cO_{D_i}(D_i)) \ar[r] 
& \Der_V(D^+)|_Z \ar[r] & \Der_V|_Z \ar[r] & \oplus_{i=1}^r \cO_{D_i}(D_i)|_Z
\ar[r] & 0
}
\]
(The leftmost term is $0$ as $\Der_V$ is locally free.) The term 
$\oplus_{i=1}^r \cO_{D_i}(D_i)|_Z$ is $N_ZV$, and the map from $\Der_V|_Z$
is the standard projection $TV|_Z \to N_ZV$. The $\Tor$ on the left is the
direct sum of $\Tor_1(\cO_Z,\cO_{D_i}(D_i))$, and it is easy to verify that
each such term is $\cong \cO_Z$, as claimed.
\end{proof}

\begin{remark}\label{coordepi}
We can choose local parameters $x_1,\dots, x_n$ for $V$ at a point of $Z$ such 
that $x_i$ is a generator for the ideal of $D_i$ for $i=1,\dots,r$. Then $\Der_V(D^+)$
has a basis given by derivations
\[
x_1 \partial_1\,,\,\dots, x_r\partial_r\,,\,\partial_{r+1}\,,\,\dots\,,\, \partial_n
\]
where $\partial_i=\partial/\partial x_i$. With the same coordinates, $\partial_{r+1},
\dots \partial_n$ restrict to a basis for $\Der_Z$, and the epimorphism
found in Lemma~\ref{epinc} acts in the evident way. The kernel is spanned
by the restrictions of $x_i \partial_i$, $i=1,\dots,r$; these are the $r$ trivial
factors appearing on the left in the sequence in Lemma~\ref{epinc}.

Also note that the `Euler derivation' $x_1\partial_1+\dots +x_r\partial_r$
spans a trivial subbundle $\cO_Z\hookrightarrow \cO_Z^{\oplus r}$ of
the kernel. Thus, we have a complex of vector bundles
\[
\xymatrix{
\cO_Z \ar@{^(->}[r] & \Der_V(D^+)|Z \ar@{->>}[r] & \Der_Z
}
\]
on $Z$. Pulling back to $E$, this gives a complex of vector bundles
\[
\xymatrix{
\cO_E \ar@{^(->}[r] & \pi^*\Der_V(D^+)|_E \ar@{->>}[r] & \pi^*\Der_Z
}
\]
on $E$. We have to verify that the same occurs for $\pi^*\Der_V(D)$
and $\Der_{\hat V}(\hat D)$.
\qede\end{remark}

Consider $\Der_V(D)$. We have (Remark~\ref{inclus}) inclusions
\[
\Der_V(D) \subseteq \Der_V(D^+) \subseteq \Der_V\quad.
\]
Restricting to $Z$, and in view of Lemma~\ref{epinc}, we get morphisms
\[
\xymatrix{
\Der_V(D)|_Z \ar[r] & \Der_V(D^+)|_Z \ar@{->>}[r] & \Der_Z\quad.
}
\]

\begin{claim}
The composition $\Der_V(D)|_Z \to \Der_Z$ is an epimorphism.
\end{claim}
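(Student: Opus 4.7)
The plan is to verify surjectivity locally analytically at an arbitrary point $p\in Z$, exploiting the minimality of $Z$ among intersections of components of $D$ in order to trivialize the local structure of $D$ along the normal directions to $Z$.

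First I would set up adapted coordinates. At $p\in Z$ there are analytic coordinates $x_1,\dots,x_n$ such that $Z=\{x_1=\cdots=x_r=0\}$ and $D_i=\{x_i=0\}$ for $i=1,\dots,r$, as in Remark~\ref{coordepi}. Since $D$ is a hypersurface arrangement, every other local component of $D$ at $p$ is (in these coordinates) a hyperplane through $p$. The crucial observation is that every such local component $H$ must contain $Z$: indeed $Z=D_1\cap\cdots\cap D_r$ is an intersection of components of $D$, so if $H\not\supseteq Z$ locally, then $H\cap D_1\cap\cdots\cap D_r$ would be an intersection of components of $D$ of strictly smaller dimension than $Z$, contradicting the hypothesis that $Z$ has lowest dimension among such intersections. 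Therefore every local component of $D$ at $p$ is cut out by a linear form in $x_1,\dots,x_r$ only.

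It follows that a local generator $F$ of the ideal of $D$ at $p$ is a product of linear forms in $x_1,\dots,x_r$ alone, and hence depends only on these variables. The derivations $\partial_{r+1},\dots,\partial_n$ then satisfy $\partial_i(F)=0$, so they are logarithmic with respect to $D$ and define sections of $\Der_V(D)$ in a neighborhood of $p$. Under the composition
\[
\Der_V(D)|_Z \hookrightarrow \Der_V(D^+)|_Z \twoheadrightarrow \Der_Z
\]
described in Remark~\ref{coordepi}, these derivations restrict to $\partial_{r+1}|_Z,\dots,\partial_n|_Z$, which form a basis of $\Der_Z$ near $p$. This proves surjectivity at $p$, and since $p\in Z$ was arbitrary, the composition is an epimorphism.

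I do not anticipate any serious obstacle here: the minimality assumption on $Z$ is exactly what forces all local components of $D$ through $p$ to contain $Z$, after which the statement is immediate in the adapted local coordinates. The only subtle point worth stating explicitly is that the local-analytic reduction is legitimate because freeness and logarithmic derivations behave well under analytic localization (already invoked implicitly in the earlier lemmas).
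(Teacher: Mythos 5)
Your proof is correct and follows essentially the same route as the paper: work in adapted analytic coordinates where $Z=\{x_1=\cdots=x_r=0\}$, note that the local equation of $D$ involves only $x_1,\dots,x_r$, and conclude that $\partial_{r+1},\dots,\partial_n$ lie in $\Der_V(D)$ and restrict to a basis of $\Der_Z$. The only difference is that you spell out, via the minimality of $Z$, why every local component of $D$ at $p$ contains $Z$ --- a point the paper simply takes as part of its standing local normal form.
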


\begin{proof}
Working with local parameters as in Remark~\ref{coordepi}, it suffices to
note that the derivations $\partial_{r+1},\dots,\partial_n$ are in $\Der_V(D)$:
this is clear, since by assumption $D$ admits a local generator of the form
$x_1\cdots x_r\, G(x_1,\dots,x_r)$.
\end{proof}

Pulling back to $E$ and using Lemma~\ref{sandwich} we get morphisms
\[
\xymatrix@H=30pt{
\pi^* \Der_V(D)|_E \ar[r]^{\sigma|_E} \ar@/_1.5pc/@{->>}[rrr] & 
\Der_{\hat V}(\hat D)|_E \ar[r] & \pi^* \Der_V(D^+) \ar@{->>}[r] & p^*\Der_Z
}
\]
and this yields the commutative triangle on the right in the diagram at the
beginning of the section.

\subsubsection{} Finally, we have to deal with the triangle on the left.

\begin{lemma}\label{onehyp}
Let $A$ be a nonsingular hypersurface of a nonsingular variety $V$. Then
there is an exact sequence of vector bundles
\[
\xymatrix{
0 \ar[r] & \cO_A  \ar[r] & \Der_V(A)|_A \ar[r] & 
\Der_A  \ar[r] & 0\quad.
}
\]
\end{lemma}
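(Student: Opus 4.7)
The natural approach is to restrict the fundamental exact sequence defining $\Der_V(A)$ to $A$ and keep track of the $\Tor$ correction. Since $A$ is a nonsingular hypersurface, for a local equation $f$ of $A$ the map $\Der_V \to \cO_A(A)$ sending $\partial \mapsto \partial(f) \bmod f$ is surjective (take $\partial = \partial/\partial f$ in adapted local parameters). Thus the sequence recalled in Remark~\ref{redrem} extends to a short exact sequence of $\cO_V$-modules
\[
0 \to \Der_V(A) \to \Der_V \to \cO_A(A) \to 0,
\]
with $\Der_V$ locally free.

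Next, I would tensor this with $\cO_A$ and read off the long exact $\Tor$ sequence. Because $\Der_V$ is locally free, $\Tor_i^{\cO_V}(\Der_V, \cO_A) = 0$ for $i \geq 1$, so the long exact sequence reads
\[
0 \to \Tor_1^{\cO_V}(\cO_A(A), \cO_A) \to \Der_V(A)|_A \to \Der_V|_A \to \cO_A(A) \to 0.
\]
The Tor can be computed from the Koszul resolution $0 \to \cO_V(-A) \xrightarrow{f} \cO_V \to \cO_A \to 0$: tensoring with $\cO_A(A)$ gives the map $\cO_A \to \cO_A(A)$ which is multiplication by $f$, hence zero on $A$, so $\Tor_1^{\cO_V}(\cO_A(A), \cO_A) \cong \cO_A$.

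It remains to identify the right-hand map $\Der_V|_A \to \cO_A(A)$ with the standard normal bundle projection $TV|_A \twoheadrightarrow N_AV$, under the canonical isomorphism $\cO_A(A) \cong N_AV$. In local coordinates $x_1,\dots,x_n$ with $A = \{x_1 = 0\}$, the map sends $\partial_i \mapsto \delta_{1i}$, which is exactly this projection; its kernel is $\Der_A$. Breaking the four-term sequence into two short exact sequences yields the desired
\[
0 \to \cO_A \to \Der_V(A)|_A \to \Der_A \to 0.
\]

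There is no real obstacle here; the only point requiring genuine care is the identification $\Tor_1^{\cO_V}(\cO_A(A),\cO_A) \cong \cO_A$, which is responsible for the nontrivial trivial subbundle $\cO_A \hookrightarrow \Der_V(A)|_A$. In local coordinates this inclusion is generated by the restriction of $x_1\partial_1$, matching the ``Euler derivation'' picture of Remark~\ref{coordepi} in the case $r = 1$, and this concrete description will be used in the next step when assembling the commutative triangle on the left of the diagram in~\S\ref{proof2}.
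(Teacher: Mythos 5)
Your proof is correct and follows essentially the same route as the paper: restrict the short exact sequence $0 \to \Der_V(A) \to \Der_V \to \cO_A(A) \to 0$ to $A$, compute $\Tor_1^{\cO_V}(\cO_A(A),\cO_A) \cong \cO_A$, and identify the cokernel map $\Der_V|_A \to \cO_A(A)$ with the normal bundle projection whose kernel is $\Der_A$. The only (harmless) omission is an explicit remark that the resulting sheaf sequence is one of vector bundles, which follows since the kernel of a surjection of vector bundles is a subbundle.
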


\begin{proof}
In the case of a nonsingular hypersurface, the basic sequence of 
Remark~\ref{redrem} is also exact on the right:
\[
\xymatrix{
0 \ar[r] & \Der_V(A) \ar[r] & \Der_V \ar[r] & \cO_A(A) \ar[r] & 0
}
\]
Restricting to $A$ gives the exact sequence
\[
\xymatrix{
0 \ar[r] & \Tor_1^{\cO_V}(\cO_A(A),\cO_A) \ar[r] & \Der_V(A)|_A \ar[r] & 
\Der_V|_A \ar[r] & \cO_A(A) \ar[r] & 0\quad.
}
\]
The kernel of the rightmost nonzero map is $\Der_A$ (note that $\cO_A(A)$ 
is the normal bundle of $A$ in $V$), and an immediate computation gives
$\Tor_1^{\cO_V}(\cO_A(A),\cO_A)\cong \cO_A$, so we get the exact
sequence in the statement. This is an exact sequence of vector bundles
since the kernel of a surjective map of vector bundles is a subbundle.
\end{proof}

\begin{remark}\label{coordmono}
Applying this lemma to $E\subseteq \hat V$ gives a distinguished copy of
$\cO_E$ in $\Der_{\hat V}(E)|_E$. Adopting local parameters at a point
of $Z$ as in Remark~\ref{coordepi}, we can choose coordinates 
\[
\hat x_1\,,\, \hat x_2\,,\, \dots \,,\, \hat x_r \,,\, \hat x_{r+1} \,,\, \dots \,,\, \hat x_n
\]
at a point of $E$ in a chart of the blow-up $\hat V$ so that the blow-up map 
is given by
\[
\left\{
\aligned
x_1 &=\hat x_1 \\
x_i &=\hat x_1 \hat x_i\quad\, i=2,\dots,r \\
x_j &=\hat x_j \qquad j=r+1,\dots,n
\endaligned \right.
\]
The exceptional divisor is given by $\hat x_1=0$. Then a basis for 
$\Der_{\hat V}(E)$ at this point is
\[
\hat x_1 \hat \partial_1\,,\, \hat \partial_2\,,\, \dots\,,\, \hat \partial_n
\]
where $\hat \partial_i = \partial/\partial\hat x_i$. The distinguished copy of
$\cO_E$ in $\Der_{\hat V}(E)$ found in Lemma~\ref{onehyp} is spanned 
by $\hat x_1\hat \partial_1$.
\qede\end{remark}

Now recall (Remark~\ref{inclus}) that $\Der_{\hat V}(\hat D)\subseteq \Der_{\hat V}(E)$.

\begin{claim}
The distinguished $\cO_E\subseteq \Der_{\hat V}(E)|_E$ is contained in 
$\Der_{\hat V}(\hat D)|_E$.
\end{claim}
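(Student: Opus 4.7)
The plan is to reduce the claim to a direct local computation using the explicit local model for the blow-up from the proof of Lemma~\ref{freeblow}.

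First I observe that since $E$ is one of the components of $\hat D$, Remark~\ref{inclus} gives an inclusion $\Der_{\hat V}(\hat D)\subseteq \Der_{\hat V}(E)$ as locally free subsheaves of $\Der_{\hat V}$. Restricting to $E$, the assertion of the claim amounts to showing that the distinguished line subbundle $\cO_E\subseteq \Der_{\hat V}(E)|_E$ of Lemma~\ref{onehyp} lands in the image of the restriction map $\Der_{\hat V}(\hat D)|_E\to \Der_{\hat V}(E)|_E$. Equivalently, it is enough to exhibit, locally around each point of $E$, a section of $\Der_{\hat V}(\hat D)$ whose image in $\Der_{\hat V}(E)$ is a local generator of this distinguished $\cO_E$.

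Next I would perform the local check in the coordinates of Remark~\ref{coordmono}. There, the distinguished section is locally spanned by the derivation $\hat x_1\hat\partial_1$, while the proof of Lemma~\ref{freeblow} shows that a reduced local equation for $\hat D$ at such a point is $\hat x_1\,G(\hat x_2,\dots,\hat x_r)$, with $G$ a product of linear forms. The one-line computation
\[
(\hat x_1\hat\partial_1)\bigl(\hat x_1\, G(\hat x_2,\dots,\hat x_r)\bigr) = \hat x_1\,G(\hat x_2,\dots,\hat x_r)
\]
then shows that $\hat x_1\hat\partial_1$ preserves the principal ideal defining $\hat D$, so it is a local section of $\Der_{\hat V}(\hat D)\subseteq \Der_{\hat V}(E)$. (If one prefers the scheme-theoretic generator $\pi^* F = \hat x_1^m F(1,\hat x_2,\dots,\hat x_r)$ of $\hat D$, the same calculation yields the factor $m$ on the right-hand side; by Remark~\ref{redrem} both computations define the same sheaf of logarithmic derivations, so either is fine.)

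Restricting $\hat x_1\hat\partial_1$ to $E$ therefore lands inside $\Der_{\hat V}(\hat D)|_E\subseteq \Der_{\hat V}(E)|_E$, and by Remark~\ref{coordmono} it agrees there with a local generator of the distinguished $\cO_E$. This yields the required factorization $\cO_E\hookrightarrow \Der_{\hat V}(\hat D)|_E$, completing the proof. I do not expect any serious obstacle here: the computation is exactly the Euler-homogeneity argument already invoked in Lemma~\ref{freeblow}, and the only point requiring care is the bookkeeping identifying the intrinsic $\cO_E$ from Lemma~\ref{onehyp} with the local span of $\hat x_1\hat\partial_1$, which is precisely what Remark~\ref{coordmono} records.
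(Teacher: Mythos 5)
Your proposal is correct and follows essentially the same route as the paper: the same coordinate chart from Remark~\ref{coordmono}, the same identification of the distinguished section with $\hat x_1\hat\partial_1$, and the same one-line Euler-homogeneity computation showing it preserves the ideal of $\hat D$ (the paper applies the derivation to the scheme-theoretic pullback $\hat x_1^m\hat x_2\cdots\hat x_r\,G(1,\hat x_2,\dots,\hat x_r)$ and gets the factor $m$, while you use the reduced equation; as you note, Remark~\ref{redrem} makes these interchangeable).
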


\begin{proof}
We work in coordinates as in Remark~\ref{coordepi} and~\ref{coordmono}. 
By hypothesis, $D$ is given analytically by the vanishing of 
$F=x_1\cdots x_r\cdot G(x_1\cdots x_r)$, where $G$ is homogeneous.
In the chart considered above in the blow-up, $\hat D$ is therefore given by 
the vanishing of
\[
\hat F=\hat x_1^m \hat x_2 \cdots \hat x_r\, G(1,\hat x_2,\dots, \hat x_r)
\]
where $m$ is the multiplicity of $D$ along $Z$. We then see that
\[
(\hat x_1\hat \partial_1) \hat F=m \,\hat x_1^m \hat x_2 \cdots \hat x_r 
\, G(1,\hat x_2,\dots, \hat x_r)=m \hat F\in (\hat F)\quad:
\]
this shows that $\hat x_1\hat \partial_1\in \Der_{\hat V}(\hat D)$, as claimed.
\end{proof}

Since $\Der_{\hat V}(\hat D)\subseteq \Der_{\hat V}(\hat D^+)$, and the
latter is isomorphic to $\pi^*\Der_V(D^+)$, we can view the distinguished
$\cO_E$ as a subsheaf of $\pi^*\Der_V(D^+)|_E$. Chasing coordinates,
it is straightforward to check that
\[
\hat x_1\hat\partial_1 \mapsto x_1\partial_1 + \cdots + x_r\partial_r\quad:
\]
that is, this copy of $\cO_E$ corresponds to the `Euler derivation' 
identified in Remark~\ref{coordepi}. Further, we see that it is also contained
in $\pi^*\Der_V(D)|_E$: indeed, since in the chosen analytic coordinates $F$ is
homogeneous (up to factors not vanishing along $Z$), the Euler derivation 
acts on $F$ by multipliying it by its degree, and hence it lands in the ideal $(F)$. 

At this point we have the following situation:
\[
\xymatrix@H=30pt{
\cO_E \ar[r] \ar@/^1.5pc/@{^(->}[rrr] 
& \pi^*\Der_V(D)|_E \ar[r]^{\sigma|_E} &
\Der_{\hat V}(\hat D)|_E \ar[r] &
\pi^*\Der_V(\hat D^+)|_E
}
\]
This yields the commutative triangle on the left in the diagram at the
beginning of the section. 
(The above construction shows that the monomorphisms from $\cO_E$ to 
$\Der_{\hat V}(\hat D)|_E$ and $\pi^*\Der_V(\hat D^+)|_E$ are monomorphisms 
of vector bundles, as needed.)
The composition with the projection to $p^*\Der_Z$
is $0$ as noted in Remark~\ref{coordepi}, so this completes the proof of
Lemma~\ref{keylemma}. Claim~\ref{mainclaim} follows from 
Lemma~\ref{keylemma} as shown in~\S\ref{graphsec}, so this concludes
the proof of~Theorem~\ref{main}.


\section{Further remarks and examples}\label{remandex}

\subsection{An example}
We illustrate Theorem~\ref{main} by computing
the Chern class of a sheaf of logarithmic derivations in a simple case.
Any value Theorem~\ref{main} may have lies in the contrast between the
standard computation, by means of the basic sequence defining the
sheaf, and the computation using Chern-Schwartz-MacPherson classes,
which has a very different, `combinatorial' flavor.

We assume $D$ consists of $m\ge 2$ nonsingular components $D_i$, each
of class $X$, meeting pairwise transversally along a codimension-$2$ nonsingular
subvariety $Z$.\medskip

{\sc ---Computation using $\csm$-classes.\/}
As $D=\cup_i D_i$, and since all components meet along $Z$, we have
\[
\one_D=\one_Z+\sum_i \one_{D_i\smallsetminus Z}=(\sum_i \one_{D_i})
-(m-1) \one_Z\quad,
\]
and hence
\[
\one_{V\smallsetminus D}=\one_V-\sum_i \one_{D_i} + (m-1)\one_Z\quad.
\]
Since $V$, all $D_i$, and $Z$ are nonsingular, the basic normalization property
of $\csm$ classes (\S\ref{csmsum}) gives
\[
\csm(\one_{V\smallsetminus D})
=c(TV)\cap [V]-\sum_i c(TD_i)\cap [D_i] + (m-1) c(TZ)\cap [Z]\quad.
\]
We are assuming that all components have the same class $X$, and hence
$Z$ has class $X\cdot X$. Thus, this gives
\[
\csm(\one_{V\smallsetminus D})
=c(TV)
\left(1-\sum_{i=1}^m \frac{X}{1+X}+(m-1) \frac{X^2}{(1+X)^2}
\right)\cap [V]
\]
According to Theorem~\ref{main}, this class equals $c(\Der_V(D))\cap [V]$.
That is,
\[
c(\Der_V(D))=\frac{c(TV)\, (1-(m-2) X)}{(1+X)^2} \quad.
\]

{\sc Standard computation.}
The basic sequence recalled in Remark~\ref{redrem} may be completed to
\[
\xymatrix{
0 \ar[r] & \Der_V(D) \ar[r] & \Der_V \ar[r] & \cO_D(D) \ar[r] & \cO_{JD}(D)
\ar[r] & 0\quad,
}
\]
where $JD$ is the {\em singularity subscheme\/} (or {\em Jacobian subscheme})
of $D$. Therefore,
\[
c(\Der_V(D))=\frac{c(\Der_V)}{c(\cO_D(D))}\, c(\cO_{JD}(D))
=\frac{c(TV)}{1+D}\, c(\cO_{JD}(D))\quad.
\]
In the case at hand, we are assuming that $D$ is defined by a section
$f_1\cdots f_m$ of $\cO(mX)$; $Z$ is defined by (say) $f_1=f_2=0$,
meeting transversally at every point of~$Z$;
and $f_i=(a_i f_1+b_i f_2)$ for $i\ge 3$, without multiple components. 
It follows that $JD$ is a complete intersection of two sections of
$\cO((m-1)X)$, so $\cO_{JD}$ is resolved by a Koszul complex:
\[
\xymatrix@C=13pt{
0 \ar[r] & \cO_V(-2(m-1)X) \ar[r] & \cO_V(-(m-1)X)\oplus \cO_V(-(m-1)X) \ar[r] &
\cO_V \ar[r] & \cO_{JD} \ar[r] & 0
}
\]
and twisting by $\cO_V(D)=\cO_V(mX)$ gives the exact sequence
\[
\xymatrix@C=20pt{
0 \ar[r] & \cO_V((-m+2)X) \ar[r] & \cO_V(X)\oplus \cO_V(X) \ar[r] &
\cO_V(D) \ar[r] & \cO_{JD}(D) \ar[r] & 0\quad.
}
\]
Thus
\[
c(\cO_{JD}(D))=\frac{c(\cO_V(D))\, c(\cO(-(m-2))X)}{c(\cO_V(X))^2}
=\frac{(1+D)(1-(m-2)X)}{(1+X)^2}\quad.
\]
Taking this into account in the expression for $c(\Der_V(D))$ given above,
we recover the result of the $\csm$ computation.
\qede\medskip

While this may be largely a matter of taste, the standard computation appears
to us to involve subtler information than the alternative combinatorial 
computation via $\csm$ classes afforded by applying Theorem~\ref{main}. 
The point is that the $\csm$ class already includes information on the singularity 
subscheme $JD$: see \cite{MR2001i:14009} for the precise relation.
Computing the $\csm$ class, which is straightforward for a hypersurface 
arrangement, takes automatically care of accounting for the total Chern class
of~$\cO_{JD}(D)$.

\subsection{A projection formula}
If $\cE$ is a vector bundle on a scheme $X$, and $\alpha: Y \to X$ is a proper morphism,
then for any class $A$ in the Chow group of $Y$ we have
\[
\alpha_*(c(\alpha^*\cE)\cap A)=c(\cE)\cap \alpha_* (A)\quad.
\]
This is a basic result on Chern classes, see~Theorem~3.2 (c) in~\cite{85k:14004}.
On a nonsingular variety, a notion of total Chern class is available for all coherent 
sheaves: this follows from the isomorphism $K_0(V)\cong K^0(V)$ for $V$ nonsingular
(\cite{85k:14004}, \S15.1) and the Whitney formula. However, a straightforward 
projection formula as in the case of vector bundles does not hold for arbitrary 
coherent sheaves.

\begin{example}
Let $V$ be nonsingular, let $X,Y\hookrightarrow V$ be distinct irreducible hypersurfaces,
and let $i: X\hookrightarrow V$ be the inclusion. From the exact sequence
\[
\xymatrix{
0 \ar[r] & \cO_V(-X) \ar[r] & \cO_V \ar[r] & \cO_X \ar[r] & 0
}
\]
it follows that $c(\cO_X)=\frac 1{1-X}$, and similarly $c(\cO_Y)=\frac 1{1-Y}$.
As $i^*(\cO_X)=\cO_X$, we see that
\[
i_*(c(i^*\cO_X)\cap [X])=i_*([X])\ne c(\cO_X)\cap i_*([X])=\frac{[X]}{1-X}\quad:
\]
the projection formula does not hold in this case. On the other hand,
$i^*(\cO_Y)=\cO_{X\cap Y}$, and $X\cap Y$ is a divisor in $X$ with bundle
$\cO_X(X\cap Y)=i^* \cO_V(Y)$; therefore,
\[
i_*(c(i^*\cO_Y)\cap [X])=i_*\left(\frac{[X]}{1-i^*Y}\right)=\frac{[X]}{1-Y}
= c(\cO_Y)\cap i_*([X])\quad:
\]
the projection formula does hold in this case.
\qede\end{example}

The difference between the two cases considered in this example is a matter of
$\Tor$ functors: $\Tor_1^{\cO_V}(\cO_X,\cO_X)\cong \cO_X(-X)$ is not trivial, while
$\Tor_1^{\cO_V}(\cO_X,\cO_Y)$ vanishes. It is essentially evident from the definitions
that for a coherent sheaf $\cF$ on $V$, and a morphism $\alpha: W\to V$,
\[
\alpha^* c(\cF)=\prod_{i\ge 0} c(\Tor_i^{\cO_V}(\cO_W,\cF))^{(-1)^i}
=c(\alpha^*\cF)\cdot \prod_{i\ge 1} c(\Tor_i^{\cO_V}(\cO_W,\cF))^{(-1)^i}\quad,
\]
and in particular $c(\alpha^*\cF)=\alpha^* c(\cF)$ if the higher $\Tor$s vanish.
In this case (for example, in the case of vector bundles) the projection formula holds
if $\alpha$ is proper. More generally, the projection formula holds if $\alpha_*$
maps to $1$ the total Chern classes of the higher $\Tor$s.

\subsection{}
Now recall the situation of this paper, and particularly the blow-up considered in
Claim~\ref{mainclaim} and \S\ref{proof}: $D$ is a hypersurface arrangement
in a nonsingular variety $V$, and $Z$ is an intersection of minimal dimension of
components of $D$. In fact, $Z=D_1\cap \cdots \cap D_r$, where $D_1,\dots,
D_r$ are components of $D$ meeting with normal crossings in a neighborhood
of $Z$. We denote by $D^+$ the union of these components, and we have 
observed (Remark~\ref{inclus}) that $\Der_V(D)\subseteq \Der_V(D^+)$.

The sections obtained by applying the derivations in $\Der_V(D^+)$ to a 
section $F$ defining $D$, together with $F$, define a subscheme $J^+D$ 
of $\cO_D$, which should
be viewed as a `modified Jacobian subscheme' of the hypersurface 
arrangement $D$ (depending on the choice of the subdivisor $D^+$). 
We consider the coherent sheaf $\cO_{J^+D}(D)$ in $V$.

Finally, recall that $\pi: \hat V\to V$ denotes the blow-up of $V$ along $Z$.

\begin{claim}\label{profor}
Theorem~\ref{main} is equivalent to the statement that, for all blow-ups as
above, $\cO_{J^+D}(D)$ satisfies the projection formula with respect
to the blow-up map $\pi$:
\[
\pi_*(c(\pi^* \cO_{J^+D}(D))\cap [\hat V]) = c(\cO_{J^+D}(D))\cap [V]\quad.
\]
\end{claim}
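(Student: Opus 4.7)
The plan is to re-express both sides of the asserted equivalence in terms of a common quantity, by running a four-term exact sequence on $V$ and pulling it back to $\hat V$, and then converting between the two identities by means of the projection formulas that are available for vector bundles and line bundles.

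First, directly from the definition of $J^+D$, the map $\theta\mapsto \theta(F)/F$ (with $F$ a local equation for $D$) fits into an exact sequence
\[
0 \to \Der_V(D) \to \Der_V(D^+) \to \cO_D(D) \to \cO_{J^+D}(D) \to 0
\]
on $V$. Whitney's formula, together with $c(\cO_D(D))=1+D$ (from $0\to\cO_V\to\cO_V(D)\to\cO_D(D)\to 0$), yields the identity
\[
(1+D)\,c(\Der_V(D))\cap[V] \;=\; c(\Der_V(D^+))\,c(\cO_{J^+D}(D))\cap[V].
\]

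Next, I would pull this sequence back to $\hat V$. Using the isomorphism $\pi^*\Der_V(D^+)\cong \Der_{\hat V}(\hat D^+)$ of Lemma~\ref{isoNC} and the identification $\pi^*\cO_D(D)\cong \cO_{\hat D}(\hat D)$ (with $\hat D=\pi^{-1}(D)$, possibly nonreduced), the pulled-back middle map identifies with $\hat\theta\mapsto \hat\theta(\pi^*F)/\pi^*F$; its kernel is $\Der_{\hat V}(\hat D)=\Der_{\hat V}(D')$ by Remark~\ref{redrem}, and its cokernel is $\pi^*\cO_{J^+D}(D)$ by right-exactness of pullback applied to the image--cokernel short exact sequence on $V$. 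This produces an exact sequence
\[
0 \to \Der_{\hat V}(D') \to \Der_{\hat V}(\hat D^+) \to \cO_{\hat D}(\hat D) \to \pi^*\cO_{J^+D}(D) \to 0,
\]
from which Whitney (together with Lemma~\ref{isoNC} and $c(\cO_{\hat D}(\hat D))=\pi^*(1+D)$) gives
\[
\pi^*(1+D)\,c(\Der_{\hat V}(D'))\cap[\hat V] \;=\; \pi^*c(\Der_V(D^+))\,c(\pi^*\cO_{J^+D}(D))\cap[\hat V].
\]

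Pushing this identity forward by $\pi_*$ and applying the standard projection formulas for the line bundle $\cO_V(D)$ and the vector bundle $\Der_V(D^+)$, the classes $1+D$ and $c(\Der_V(D^+))$ come outside $\pi_*$. Comparing with the identity on $V$ and cancelling the common invertible factor $c(\Der_V(D^+))/(1+D)$ shows directly that Claim~\ref{mainclaim},
\[
\pi_*\bigl(c(\Der_{\hat V}(D'))\cap[\hat V]\bigr) \;=\; c(\Der_V(D))\cap[V],
\]
is equivalent to the projection formula $\pi_*(c(\pi^*\cO_{J^+D}(D))\cap[\hat V])=c(\cO_{J^+D}(D))\cap[V]$. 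Since Theorem~\ref{main} has already been reduced in \S\ref{setup} to Claim~\ref{mainclaim} holding for every blow-up as above, this yields the stated equivalence. The main obstacle is the middle step: verifying both the exactness of the sequence on $\hat V$ and that its cokernel is genuinely $\pi^*\cO_{J^+D}(D)$ (with no stray $\Tor$ corrections intruding). This rests on the identification $\hat\theta(\pi^*F)=\pi^*(\theta(F))$ underlying the isomorphism of Lemma~\ref{isoNC}, and is the one place in the argument where the local blow-up structure is used nontrivially.
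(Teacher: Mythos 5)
Your proposal is correct and follows essentially the same route as the paper's proof: the same four-term exact sequence $0\to\Der_V(D)\to\Der_V(D^+)\to\cO_D(D)\to\cO_{J^+D}(D)\to 0$, its pullback to $\hat V$ with kernel $\Der_{\hat V}(\hat D)$ identified via Lemma~\ref{isoNC} and cokernel $\pi^*\cO_{J^+D}(D)$ via right-exactness, followed by Whitney's formula, the projection formula for the locally free $\Der_V(D^+)$ and $\cO_V(D)$, and cancellation of the invertible factor $c(\Der_V(D^+))/(1+D)$. The one step you flag as the main obstacle is handled in the paper exactly as you anticipate.
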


\begin{proof}
Arguing as in \S\ref{setup}, we only need to deal with the case of a single 
blow-up, and show that the given formula is equivalent to the formula in 
Claim~\ref{mainclaim}. 

Restricting the basic sequence recalled in Remark~\ref{redrem} to $D^+$ 
gives an exact sequence
\[
\xymatrix{
0 \ar[r] & \Der_V(D) \ar[r] & \Der_V(D^+) \ar[r] & \cO_D(D)
}
\]
and the image of the last morphism is the ideal generated by applying
the derivations from $\Der_V(D^+)$ to $F$; this ideal defines $J^+D$, so
we have an exact sequence
\[
\xymatrix{
0 \ar[r] & \Der_V(D) \ar[r] & \Der_V(D^+) \ar[r] & \cO_D(D) \ar[r] &
\cO_{J^+D}(D) \ar[r] & 0
}
\]
on $V$. Notice that this implies that
\[
c(\Der_V(D))=\frac{c(\Der_V(D^+))}{1+D}\, c(\cO_{J^+D}(D))\quad.
\]
Now we claim that (with notation as in \S\ref{proof}) there is an exact 
sequence
\begin{equation*}
\tag{*}
\xymatrix@C=20pt{
0 \ar[r] & \Der_{\hat V}(\hat D) \ar[r] & \pi^*\Der_V(D^+) \ar[r] & \
\pi^*\cO_D(D) \ar[r] & \pi^*\cO_{J^+D}(D) \ar[r] & 0\quad.
}
\end{equation*}
Indeed, pulling back the last terms of the previous sequence to $\hat V$ 
gives the last terms of~(*), by right-exactness of $-\otimes_{\cO_V} \cO_{\hat V}$;
via the isomorphisms $\pi^*\Der_V(D^+)\cong \Der_{\hat V}(\hat D^+)$
(Lemma~\ref{isoNC}) and $\pi^*\cO_D(D)\cong \cO_{\hat D}(\hat D)$, 
the morphism in the middle is seen to act by applying derivations from 
$\Der_{\hat V}(\hat D)$ to a section defining $\hat D$. Hence its kernel is 
$\Der_{\hat V}(\hat D)$, as needed for~(*).
From (*), it follows that
\[
c(\Der_{\hat V}(\hat D))=\frac{c(\pi^*\Der_V(D^+))}{1+\pi^* D}\, 
c(\pi^* \cO_{J^+D}(D))\quad,
\]
and hence, applying the ordinary projection formula (as $\Der_V(D^+)$
is locally free)
\[
\pi_*( c(\Der_{\hat V}(\hat D))\cap [\hat V])=
\frac{c(\Der_V(D^+))}{1+D}\, \pi_*(c(\pi^*\cO_{J^+D}(D))\cap [\hat V])\quad.
\]
Comparing with the previous equality of Chern classes, we see that the
projection formula for $\cO_{J^+D}(D)$,
\[
\pi_*(c(\pi^*\cO_{J^+D}(D))\cap [\hat V])=c(\cO_{J^+D}(D))\cap [V]
\]
is equivalent to
\[
\pi_*( c(\Der_{\hat V}(\hat D))\cap [\hat V])= c(\Der_{\hat V}(\hat D))\cap [V]
\quad,
\]
that is the formula in Claim~\ref{mainclaim}, as claimed.
(As pointed out in Remark~\ref{redrem}, we can replace $\hat D$ for $D'$ in Claim~\ref{mainclaim}.) 
\end{proof}

By Claim~\ref{profor}, an independent proof of the projection formula for 
$\cO_{J^+D}(D)$ would give an alternative proof of Theorem~\ref{main}.
Note that the relevant $\Tor$ does not vanish in general; the task amounts
to showing that its Chern class pushes forward to $1$. We were not able to 
construct a more direct proof of this fact.

\begin{remark}
Xia Liao has shown (\cite{liao}) that the equality in Theorem~\ref{main},
for any divisor $D$, is equivalent to a projection formula involving the blow-up 
along the (ordinary) Jacobian subscheme of $D$.
\qede\end{remark}


\end{document}